\numberwithin{equation}{section}
\newtheorem{theorem}{Theorem}[section]
\newtheorem{corollary}[theorem]{Corollary}
\newtheorem{proposition}[theorem]{Proposition}
\newtheorem{conjecture}[theorem]{Conjecture}
\newtheorem{lemma}[theorem]{Lemma}
\newcommand{\Pl}{\mathbb P}
\newcommand{\E}{ \mathbb E}
\def\E{\mathbb{E}}
\def\Var{\mathrm{Var}}
\def\Cov{\mathrm{Cov}}
  \colorlet{greencolor}{green!50!black}
  \colorlet{textcolor}{red}
  \colorlet{tancolor}{orange!80!black}
  \colorlet{bluecolor}{blue}
\definecolor{plotcolor1}{rgb}{0,0.447,0.741}
\definecolor{plotcolor2}{rgb}{0.741,0,0.447}
\definecolor{plotcolor3}{rgb}{0,0.741,0.294}
\definecolor{plotcolor4}{rgb}{0.741,0.294,0}
\definecolor{plotcoloraux}{rgb}{0.447,0.447,0.447}
\tikzset{plotstyle1/.style={color=plotcolor1,solid,line width=1.0pt}}
\tikzset{plotstyle2/.style={color=plotcolor2,densely dashed,line width=1.0pt}}
\tikzset{plotstyle3/.style={color=plotcolor3,dotted,line width=1.0pt}}
\tikzset{plotstyle4/.style={color=plotcolor4,loosely dashed,line width=1.0pt}}
\tikzset{auxlines/.style={color=plotcoloraux,solid,line width=0.5pt}}
\newcommand{\markersize}{0.7pt}
\tikzset{discretemarkers/.style={mark=*,mark options={solid},mark size=\markersize}}
\title{Stationary Markovian Arrival Processes,\\ Results and Open Problems}
\author{Azam Asanjarani\footnote{The University of Auckland. Email: azam.asanjarani@auckland.ac.nz}, 
Yoni Nazarathy\footnote{The University of Queensland. Email: y.nazarathy@uq.edu.au}.}
\date{ April 15, 2019}
\begin{document}
\maketitle

\begin{abstract}
We consider two classes of irreducible Markovian arrival processes specified by the matrices $C$ and $D$. The Markov Modulated Poison Process (MMPP) and the Markovian Switched Poison Process (MSPP). The former exhibits a diagonal $D$ while the latter exhibits a diagonal $C$. 
For these two classes, we consider the following statements: (I) Overdispersion of the counts process. (II) A non-increasing hazard rate of the stationary inter-event time. (III) The squared coefficient of variation of the event stationary process is greater or equal to unity. (IV)  A stochastic order showing that the time stationary inter-arrival time dominates the event-stationary time.
For general MSPPs and two-state MMPPs, we show that (I)-(IV) hold. Then for general MMPPs, it is easy to establish (I), while (II) is false due to a counter-example of Miklos Telek and Illes Horvath. For general simple point processes, (III) follows from (IV). For MMPPs we conjecture and numerically test that (IV) and thus (III) hold. 
Importantly, modeling folklore has often treated MMPPs as ``bursty'' and implicitly assumed that (III) holds. However, this is still an open question.

\end{abstract}

\textbf{Keywords}: Markovian Arrival Processes (MAPs), Markov Modulated Poisson Process (MMPP), Markov switched Poisson process, overdispersion, hazard rate, squared coefficient of variation, inter-event times, stationary process. 

%%%%%%%%%%%%%%%%%%%%%%%%%%%%%%%%%%%%%%%%%%%%%%%%
%%%%%%%%%%%%%%%%%%%%%%%%%%%%%%%%%%%%%%%%%%%%%%%%
%%%%%%%%%%%%%%%%%%%%%%%%%%%%%%%%%%%%%%%%%%%%%%%%
%%%%%%%%%%%%%%%%%%%%%%%%%%%%%%%%%%%%%%%%%%%%%%%%
%%%%%%%%%%%%%%%%%%%%%%%%%%%%%%%%%%%%%%%%%%%%%%%%
%%%%%%%%%%%%%%%%%%%%%%%%%%%%%%%%%%%%%%%%%%%%%%%%
%%%%%%%%%%%%%%%%%%%%%%%%%%%%%%%%%%%%%%%%%%%%%%%%
%%%%%%%%%%%%%%%%%%%%%%%%%%%%%%%%%%%%%%%%%%%%%%%%
%%%%%%%%%%%%%%%%%%%%%%%%%%%%%%%%%%%%%%%%%%%%%%%%
\section{Introduction}
\label{sec:intro}

Point processes on the line, generated by transitions of Continuous Time Markov Chains (CTMCs) have been studied intensely by the applied probability community over the past few decades under the umbrella of Matrix Analytic Methods (MAM), see e.g. \cite{latouche1999introduction}. These have been applied to teletraffic \cite{akar1998matrix}, business networks \cite{herbertsson2007pricing}, social operations research \cite{xing2013operations}, and biological systems \cite{olsson2015equilibrium}. The typical model referred to as the Markovian Arrival Process (MAP) is comprised of a finite state irreducible CTMC which generates events at selected instances of state change and/or according to Poisson processes modulated by the CTMC. MAPs have been shown to be dense in the class of point processes so that they can essentially approximate any point process, \cite{asmussen1993marked}. Yet at the same time, they are analytically tractable and may often be incorporated effectively within more complex stochastic models \cite{neuts1979versatile}. 
%Some notable descriptions of MAPs are in \cite{asmussen2003applied} (Chapter XI), \cite{he2014fundamentals} (Chapter 2), and  \cite{latouche1999introduction} (Chapter 3).
%, \cite{lucantoni1991new}, and \cite{neuts1979versatile}.

In general, treating point processes as {\em stationary} often yields a useful mathematical perspective which matches scenarios when there is no known dependence on time. In describing a point process we use  $N(t)$ to denote the number of events during $[0,t]$ and further use the sequence $\{T_n\}$ to denote the sequence of inter-event times. Two notions of stationarity are useful in this respect. Roughly, a point process is {\em time-stationary} if the distribution of the number of events within a given interval does not depend on the location of the interval; that is if $N(t_1+s)-N(t_1)$ is distributed as $N(t_2+s)-N(t_2)$ for any non-negative $t_1, t_2$ and $s$. A point process is {\em event-stationary} if the joint distribution of $T_{k_1},\ldots,T_{k_n}$ is the same as that of $T_{k_1+\ell},\ldots,T_{k_n+\ell}$ for any integer sequence of indices $k_1, \ldots,k_n$ and any integer shift $\ell$. For a given model of a point process, one may often consider either the event-stationary or the time-stationary case. The probability laws of both cases agree in the case of the Poisson process. However, this is not true in general. 
For MAPs, time-stationarity and event-stationarity are easily characterized by the initial distribution of the background CTMC. Starting it at its stationary distribution yields time-stationarity and starting at the stationary distribution of the embedded Markov chain (jump chain) yields event-stationarity.

A common way to parameterize MAPs is by considering the generator, $Q$, of an irreducible finite state CTMC and setting $Q= C + D$. Roughly speaking, the matrix $C$ determines state transitions without event counts and the matrix $D$ determines event counts. Such parameterization hints at considering two special cases: Markov Modulated Poisson Processes (MMPP) arising from a diagonal matrix $D$, and Markovian Switched Poisson Processes (MSPP) arising from a diagonal matrix $C$. 

MMPPs are a widely used class of processes in modelling and are a typical example of a Cox process, also known as  a doubly stochastic Poisson process,  \cite{grandell2006doubly} and \cite{tang2009markov}. For a detailed outline of a variety of classic MMPP results, see~\cite{fischer1993markov} and references therein. MSPPs were introduced in \cite{dan1991counter} and to date, have not been as popular for modeling. However, the duality of diagonal $D$ vs. diagonal $C$ motivates us to consider and contrast both these processes. We also note that hyper-exponential renewal processes are special cases of MSPPs as well as Markovian Transition Counting Processes (as introduced in \cite{asanjarani2016queueing}).

Our focus in this paper is on second order properties of MMPPs and MSPPs and related traits. Consider the squared coefficient of variation and the limiting index of dispersion of counts given by,
\begin{equation}
\label{eq:3535}
c^2 = \frac{\Var(T_1^{\boldsymbol{\alpha}})}{\E^2\,[ T_1^{\boldsymbol{\alpha}}]},
\qquad
\mbox{and}
\qquad
d^2 = \lim_{t \to \infty} \frac{\Var(N(t))}{\E[N(t)]},
\end{equation} 
where $T_1^{\boldsymbol{\alpha}}$ is the time of the first event, taken from the event stationary version. Modelling folklore of MMPP sometimes assumes that $c^2 \ge 1$. This is perhaps due to the fact that $d^2 \ge 1$ is straightforward to verify and the similarity between these measures (for example for a renewal process, $c^2 = d^2$). However, as we highlight in this paper, establishing such ``burstiness'' properties is not straightforward.

A related property is having $T_1^\alpha$ exhibit Decreasing Hazard Rate (DHR), where for a random variable with PDF $f(t)$ and CDF $F(t)$ the hazard rate is,
\[
h(t)=\frac{f(t)}{1-F(t)}.
\]
A further related property is the stochastic order, $T_1^\pi \ge_{\mbox{st}} T_1^\alpha$ where $T_1^\pi$ is the first event time in the time-stationary version.  We denote the properties as follows:
\begin{description}
\item (I) $d^2 \ge 1$.
\item (II) $T_1^{\boldsymbol{\alpha}}$ exhibits DHR. 
\item (III) $c^2 \ge 1$.
\item (IV) The stochastic order $T_1^{\boldsymbol{\pi}} \ge_{\mbox{st}} T_1^{\boldsymbol{\alpha}}$.
\end{description} 

All these properties are related and in this paper we highlight relationships between (I), (II), (III) and (IV) and establish the following: For MSPPs and MMPPs of order $2$ we show that (I)--(IV) holds. For general MMPPs it is known that (I) holds however, a counter-example of Miklos Telek and  Illes  Horvath shows that (II) does not hold and we conjecture (and numerically test) that (III) and (IV) holds.

Our interest in this class of problems stemmed from relationships between different types of MAPs as in \cite{nazarathy2008asymptotic} and \cite{asanjarani2016queueing}. Once it became evident that $c^2 \ge 1$ for MMPPs is an open problem even though it is acknowledged as a modeling fact in folklore, we searched for alternative proof avenues. This led to the stochastic order in (IV) as well as to considering DHR properties (the latter via communication with Miklos Telek and Illes Horvaths).

The remainder of the paper is structured as follows. In Section~\ref{sec2} we present preliminaries, focusing on the relationships between properties (I) -- (IV) as well as defining MMPPs and MSPPs. In Section~\ref{sec3} we present our main results and the conjecture. We close in Section~\ref{sec4}.

%QQQQ NOTE: **** maybe bring back if "reader" is interested in why SCV is important ****
%Within the queueing network approximation literature (see for example \cite{whitt1983queueing} and more recent papers citing that work), the squared coefficient of variation plays a central role. It has even been adopted within the stochastic manufacturing modelling community as the standard measure of variability of flows (see \cite{buzacott1993stochastic}). For queueing network approximations, the typical idea is to approximate streams of customers moving between nodes in a queueing network by renewal processes or other similar processes. In doing so, such heuristic schemes often focus heavily on the squared coefficient of variation.  For example, in a recent paper \cite{kim2011modeling}, the author even suggests (as an approximation) to use MMPPs and states (see page 2 of that paper) ``{\em For example, the squared coefficient of variation (SCV) of stationary intervals must be greater than or equal to $1$.}'', referring to MMPPs. Hence we believe that in stochastic modelling and engineering folklore, $c^2$ is a typical important quantity.

%%%%%%%%%%%%%%%%%%%%%%%%%%%%%%%%%%%%%%%%%%%%%%%%
%%%%%%%%%%%%%%%%%%%%%%%%%%%%%%%%%%%%%%%%%%%%%%%%
%%%%%%%%%%%%%%%%%%%%%%%%%%%%%%%%%%%%%%%%%%%%%%%%
%%%%%%%%%%%%%%%%%%%%%%%%%%%%%%%%%%%%%%%%%%%%%%%%
%%%%%%%%%%%%%%%%%%%%%%%%%%%%%%%%%%%%%%%%%%%%%%%%
%%%%%%%%%%%%%%%%%%%%%%%%%%%%%%%%%%%%%%%%%%%%%%%%
%%%%%%%%%%%%%%%%%%%%%%%%%%%%%%%%%%%%%%%%%%%%%%%%
%%%%%%%%%%%%%%%%%%%%%%%%%%%%%%%%%%%%%%%%%%%%%%%%
%%%%%%%%%%%%%%%%%%%%%%%%%%%%%%%%%%%%%%%%%%%%%%%%
\section{Preliminaries}
\label{sec2}

Consider first properties (I)--(IV) and their relationships. With an aim of establishing property (III), $c^2 \ge 1$, there are several possible avenues based on properties (I), (II) and (IV). We now explain these relationships.

\vspace{5pt}
\noindent
\paragraph*{Using property (I):} First, from the theory of simple point processes on the line, note the relationship between $d^2$ and $c^2$: 
\begin{equation}\label{eq:dcR}
d^2 = c^2\Big(1+2\sum_{j=1}^\infty \frac{\Cov(T_0^{\boldsymbol{\alpha}},T_j^{\boldsymbol{\alpha}})}{\Var(T_0^{\boldsymbol{\alpha}})}\Big).
\end{equation}
However, the autocorrelation structure is typically intractable and hence does not yield results. If we were focusing on a renewal process where $T_i$ and $T_j$ are independent for $i \neq j$ then this immediately shows that $d^2 = c^2$. Our focus is broader and hence property (I) indicating that $d^2 \ge 1$ does not appear to be of use.

\vspace{5pt}
\noindent
\paragraph*{Using property (II):} An alternative way is to consider property (II) and use the fact that for any DHR random variable we have $c^2 \ge 1$ (see \cite{stoyan1983comparison}). Hence if property (II) holds then (III) holds.

%QQQQ1 - look at reference and discuss (maybe explicit theorem number from the reference is needed) or similar.

\vspace{5pt}
\noindent
\paragraph*{Using property (IV):} We have the following Lemma, implying that (III) is a consequence of the stochastic order (IV).

\begin{lemma}
\label{lem:soscv}
Consider a simple non-transient point process on the line, and let $T_1^{\boldsymbol{\pi}}$, $T_1^{\boldsymbol{\alpha}}$ represent the first inter-event time in the time-stationary case and event-stationary case respectively. Then $c^2 \ge 1$ if and only if $\E[T_1^{\boldsymbol{\pi}}] \geq \E[T_1^{\boldsymbol{\alpha}}]$.
\end{lemma}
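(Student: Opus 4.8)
The plan is to reduce the claimed equivalence to a single moment identity linking the time-stationary and event-stationary versions of the process, after which the passage to $c^2 \ge 1$ is one line of algebra.

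Recall that $T_1^{\boldsymbol{\pi}}$ is the time of the first event after the origin in the time-stationary version, i.e. the forward recurrence time at $0$. The one input I would draw on is the classical inversion (Palm--Khinchin) formula for a stationary simple point process of finite positive intensity $\lambda = 1/\E[T_1^{\boldsymbol{\alpha}}]$: for every $x \ge 0$,
\[
\Pr\big(T_1^{\boldsymbol{\pi}} > x\big) \;=\; \Pr\big(N(x) = 0\big) \;=\; \lambda \int_x^\infty \Pr\big(T_1^{\boldsymbol{\alpha}} > u\big)\, du ,
\]
so that $T_1^{\boldsymbol{\pi}}$ has the equilibrium (stationary-excess) distribution associated with the law of $T_1^{\boldsymbol{\alpha}}$. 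The ``non-transient'' hypothesis is precisely what ensures $0 < \lambda < \infty$, hence $0 < \E[T_1^{\boldsymbol{\alpha}}] < \infty$, so that this is meaningful.

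Next I would integrate the tail. Writing $\E[T_1^{\boldsymbol{\pi}}] = \int_0^\infty \Pr(T_1^{\boldsymbol{\pi}} > x)\, dx$, swapping the order of integration by Tonelli, and using $\int_0^\infty u\,\Pr(T_1^{\boldsymbol{\alpha}} > u)\, du = \tfrac12\,\E[(T_1^{\boldsymbol{\alpha}})^2]$, I obtain the identity
\[
\E[T_1^{\boldsymbol{\pi}}] \;=\; \frac{\E[(T_1^{\boldsymbol{\alpha}})^2]}{2\,\E[T_1^{\boldsymbol{\alpha}}]}.
\]
If $\E[(T_1^{\boldsymbol{\alpha}})^2] = \infty$ then both $\E[T_1^{\boldsymbol{\pi}}]$ and $c^2$ are infinite and the equivalence holds trivially, so I may assume this moment is finite. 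Substituting $\E[(T_1^{\boldsymbol{\alpha}})^2] = \Var(T_1^{\boldsymbol{\alpha}}) + \E[T_1^{\boldsymbol{\alpha}}]^2$ into the identity, the inequality $\E[T_1^{\boldsymbol{\pi}}] \ge \E[T_1^{\boldsymbol{\alpha}}]$ is, after multiplying through by $2\,\E[T_1^{\boldsymbol{\alpha}}] > 0$, equivalent to $\Var(T_1^{\boldsymbol{\alpha}}) \ge \E[T_1^{\boldsymbol{\alpha}}]^2$, i.e. to $c^2 \ge 1$.

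The only genuinely delicate point is the first step: invoking the inversion formula at the right level of generality (a simple, non-transient point process, not necessarily renewal or Markovian). I would either cite it from a standard Palm-calculus reference, or give the equivalent elementary ``inspection paradox'' argument: the inter-event interval straddling the origin is length-biased, hence has mean $\E[(T_1^{\boldsymbol{\alpha}})^2]/\E[T_1^{\boldsymbol{\alpha}}]$, and conditionally on its length the origin is uniformly placed within it, so $T_1^{\boldsymbol{\pi}}$ has half that mean. Everything downstream of this is routine.
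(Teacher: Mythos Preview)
Your proof is correct and follows essentially the same route as the paper: both rest on the single identity $\E[T_1^{\boldsymbol{\pi}}] = \E[(T_1^{\boldsymbol{\alpha}})^2]\big/\big(2\,\E[T_1^{\boldsymbol{\alpha}}]\big)$, after which the equivalence with $c^2 \ge 1$ is immediate algebra. The only difference is cosmetic: the paper simply cites this identity from Daley--Vere-Jones, whereas you sketch its derivation via the Palm--Khinchin inversion formula and also dispose of the infinite-second-moment case explicitly.
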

\begin{proof}
From point process theory (see for example,  Eq. (3.4.17) of \cite{daley2007introduction}), it holds $$
\E[T_1^{\boldsymbol{\pi}}]=\frac{1}{2}\lambda^* \E\big[\big(T_1^{\boldsymbol{\alpha}}\big)^2\big],
$$
where,
\[
\lambda^* = \lim_{t \to \infty} \frac{E\big[N[0,t]\big]}{t} = \frac{1}{\E[T_1^{\boldsymbol{\alpha}}]}.
\]
Now,
\[
c^2 =\frac{\E[\big(T_1^{\boldsymbol{\alpha}}\big)^2]-\big(\E[T_1^{\boldsymbol{\alpha}}]\big)^2}{\big(\E[T_1^{\boldsymbol{\alpha}}]\big)^2} = 2 \frac{\E[T_1^{\boldsymbol{\pi}}]}{\E[T_1^{\boldsymbol{\alpha}}]} - 1,
\]
and we obtain the result.
\end{proof}

\paragraph*{MAPs:} We now describe Markovian Arrival Process (MAPs). 
%A MAP is a mathematical model based on a Markov chain, used for modelling events occurring over time. 
A MAP of order $p$ (MAP$_p$) is generated by a two-dimensional Markov process  $\{(N(t), X(t)); t \geq 0\}$ on state space $\{0,1, 2, \cdots\}\times \{1,2, \cdots, p\}$. The counting process $N(\cdot)$ counts  the number of  ``events''  in $[0,t]$ with  $\Pl(N(0)=0)=1$. The phase process $X(\cdot)$ is an irreducible CTMC with state space $\{1, \ldots, p\}$, initial distribution $\boldsymbol{\eta}$  and  generator matrix $Q$. A MAP is characterized by parameters $(\boldsymbol{\eta}, C,D)$, where the matrix  $C$ has negative diagonal elements and non-negative off-diagonal elements and records the rates of phase transitions which are not associated with an event. The matrix $D$ has non-negative elements and describes the changes of the phase process with an event (increase of $N(t)$ by 1). Moreover, we have $Q=C+D$. More details are in \cite{asmussen2003applied} (Chapter~XI) and \cite{he2014fundamentals} (Chapter~2).
 
MAPs are attractive due to the tractability of many of their properties, including distribution functions, generating functions, and moments of both $N(\cdot)$ and the sequence of inter-event times $\{T_n\}$. 
%The density of the time until the first event is 
%$
%f(t) = \boldsymbol{\eta} e^{C t} D {\mathbf 1}
%$.
%This is in the form of a Phase-type (PH) distribution  $PH(\boldsymbol{\eta}, C)$. Here $\boldsymbol{\eta}$ is the so-called initial distribution vector and the sub-generator $C$ defines transition rates of a CTMC among its transient states (see Chapter 1 of \cite{he2014fundamentals} for more details on PH distributions).  Note that here since $Q {\mathbf 1} = {\mathbf 0}$, the exit vector $-C {\mathbf 1}$ can be represented by $D {\mathbf 1}$ as well. 
%
Since $Q$ is assumed irreducible and finite, it has a unique stationary distribution $\boldsymbol{\pi}$ satisfying $\boldsymbol{\pi} Q = \mathbf{0}'$, $\boldsymbol{\pi} \mathbf{1} = 1$.  Note that from $Q {\mathbf 1} = \mathbf {0}'$ we have  $-C {\mathbf 1}=D {\mathbf 1}$.
 Of further interest is the embedded discrete-time Markov chain with irreducible stochastic matrix $P = (-C)^{-1}D$ and stationary distribution $\boldsymbol{\alpha}$, where $\boldsymbol{\alpha} P=\boldsymbol{\alpha}$ and $\boldsymbol{\alpha} \mathbf{1}=1$. 

Observe the relation between the stationary distributions $\boldsymbol{\pi}$ and $\boldsymbol{\alpha}$:
\begin{equation}\label{Eq:pi-alpha}
\boldsymbol{\alpha}=\frac{\boldsymbol{\pi} D}{\boldsymbol{\pi} D \mathbf{1}} \qquad \text{and}  \qquad \boldsymbol{\pi}=\frac{\boldsymbol{\alpha} (-C)^{-1}}{\boldsymbol{\alpha} (-C)^{-1}\mathbf{1}}=\lambda^*\boldsymbol{\alpha} (-C)^{-1},
\end{equation}
where $\lambda^* =  \boldsymbol{\pi} D {\mathbf 1} = - \boldsymbol{\pi} C {\mathbf 1}$.

The following known proposition, as distilled from the literature (see for example \cite{asmussen2003applied}, Chapter XI) provides the key results of MAPs that we use in this paper. It shows that $T_1$ is a Phase Type (PH) random variable with parameters $\boldsymbol{\eta}$ for the initial distribution of the phase and $C$ for the sub-generator matrix. It further shows that the initial distribution of the phase process may render the MAP as time stationary or event stationary.

\begin{proposition}
Consider a MAP with parameters ($\boldsymbol{\eta}$,$C$,$D$), then
\begin{equation}
\label{eq:T1dist}
\Pl(T_1 > t) = \boldsymbol{\eta} e^{C t} {\mathbf 1}.
\end{equation}
Further, if $\boldsymbol{\eta} = \boldsymbol{\pi}$ then the MAP is time-stationary and if $\boldsymbol{\eta}=\boldsymbol{\alpha}$ it is event stationary, where $\boldsymbol{\pi}$ and $\boldsymbol{\alpha}$ are associated stationary distributions.
\end{proposition}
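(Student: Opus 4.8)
The plan is to treat the MAP $\{(N(t),X(t))\}$ as a (Markov-modulated) Markov process and exploit its event-free, or ``taboo'', dynamics. For the first claim I would introduce the row vector $\mathbf{p}(t)$ with entries $p_j(t)=\Pl\big(T_1>t,\ X(t)=j\big)$, the probability that no event has yet occurred and the phase at time $t$ equals $j$. A forward Kolmogorov balance over $[t,t+dt]$ shows that the only transitions keeping the process inside the taboo set are the event-free phase moves recorded by the off-diagonal entries of $C$, while the rate of leaving the taboo set out of phase $j$ is the total event rate $(D\mathbf{1})_j$; since $(C+D)\mathbf{1}=\mathbf{0}$ forces $C_{jj}=-\sum_{k\neq j}C_{jk}-(D\mathbf{1})_j$, these contributions assemble exactly into the linear system $\mathbf{p}'(t)=\mathbf{p}(t)C$, $\mathbf{p}(0)=\boldsymbol{\eta}$, whence $\mathbf{p}(t)=\boldsymbol{\eta}\,e^{Ct}$ and $\Pl(T_1>t)=\mathbf{p}(t)\mathbf{1}=\boldsymbol{\eta}\,e^{Ct}\mathbf{1}$. (Equivalently, one may simply cite the standard phase-type representation of $T_1$.) The one point needing a word of care is that $C$ is a non-singular sub-generator matrix: this follows from irreducibility of $Q$ together with $D\neq 0$, which renders the taboo process transient so that $(-C)^{-1}=\int_0^\infty e^{Ct}\,dt$ exists; I reuse this below.

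Next I would identify the phase chain at event epochs. Writing $\sigma_n=T_1+\cdots+T_n$, $X_n=X(\sigma_n+)$ and $X_0=X(0)$, a refinement of the computation above gives $\Pl\big(T_{n+1}\in dt,\ X_{n+1}=j\mid X_n=i\big)=(e^{Ct}D)_{ij}\,dt$, and integrating over $t\in(0,\infty)$ yields $\Pl(X_{n+1}=j\mid X_n=i)=\big((-C)^{-1}D\big)_{ij}=P_{ij}$; moreover, by the strong Markov property, the conditional law of $(T_{n+1},X_{n+1})$ given the entire past depends on $X_n$ alone. Hence $\{(T_n,X_n)\}$ is a Markov-modulated sequence driven by the jump chain $\{X_n\}$ with transition matrix $P$, and in particular $\{T_n\}$ is a stationary sequence whenever $\{X_n\}$ is a stationary chain.

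With this in hand the two stationarity statements are immediate. For event-stationarity, take $\boldsymbol{\eta}=\boldsymbol{\alpha}$ with $\boldsymbol{\alpha}P=\boldsymbol{\alpha}$: then $X_n\sim\boldsymbol{\alpha}$ for every $n\ge 0$ by induction, so $\{X_n\}$, and therefore $\{T_n\}$, is stationary, which is exactly the shift-invariance of the joint laws of the $T_{k_i}$ required by the definition. For time-stationarity, take $\boldsymbol{\eta}=\boldsymbol{\pi}$ with $\boldsymbol{\pi}Q=\mathbf{0}$: then $X(\cdot)$ is a stationary CTMC, so $X(s)\sim\boldsymbol{\pi}$ for all $s$, and since $\{(N(t),X(t))\}$ is a time-homogeneous Markov process the law of the recentred, shifted path $\{(N(s+u)-N(s),\,X(s+u))\}_{u\ge 0}$ conditional on $X(s)$ depends only on $X(s)$; averaging over $X(s)\sim\boldsymbol{\pi}$ makes this law independent of $s$, so in particular $N(t_1+s)-N(t_1)$ and $N(t_2+s)-N(t_2)$ have the same distribution. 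The infinitesimal bookkeeping and the Markov-chain inductions are routine; the only genuinely substantive ingredient is the identity $\int_0^\infty e^{Ct}D\,dt=(-C)^{-1}D=P$ linking the continuous-time taboo dynamics to the embedded jump chain, and this is the step --- standard but essential --- that requires $-C$ to be invertible, which is where irreducibility of $Q$ and $D\neq 0$ are used.
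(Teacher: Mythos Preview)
Your argument is correct and is essentially the standard derivation one finds in the matrix-analytic literature: the taboo (event-free) Kolmogorov forward system $\mathbf{p}'(t)=\mathbf{p}(t)C$ for the phase-type survival function, the identification of the embedded post-event phase chain via $\int_0^\infty e^{Ct}D\,dt=(-C)^{-1}D=P$, and the reduction of event- and time-stationarity to stationarity of $\{X_n\}$ under $\boldsymbol{\alpha}$ and of $\{X(t)\}$ under $\boldsymbol{\pi}$ respectively. Each step is sound, and you are right to flag that invertibility of $-C$ (equivalently, transience of the taboo chain) is the one place where irreducibility of $Q$ together with $D\neq 0$ is genuinely needed.

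There is, however, nothing to compare against: the paper does not prove this proposition at all. It is stated as a ``known proposition, as distilled from the literature'' with a pointer to Asmussen, \emph{Applied Probability and Queues}, Chapter~XI, and is used as a black box thereafter. So your write-up is not an alternative route but rather a self-contained supply of the proof that the paper chose to cite rather than reproduce; the approach you take is exactly the one underlying the cited reference.
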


Note that for such a $PH(\boldsymbol{\eta}, C)$ random variable the density $f(t)$ and the hazard rate $h(t)$, are respectively,
\[
f(t) = \boldsymbol{\eta} e^{C t} D {\mathbf 1},
\qquad
h(t) = \frac{\boldsymbol{\eta}e^{Ct} D \mathbf{1}}{\boldsymbol{\eta}e^{Ct}  \mathbf{1}}.
\]
Further, as may be used for showing DHR, the derivative of the hazard rate is,
\begin{equation}
\label{eq:derHazard}
h^{\prime}(t)= \frac{\boldsymbol{\eta} C e^{Ct} (-C) \mathbf{1}  \,\,
\boldsymbol{\eta}e^{Ct}  \mathbf{1} - \boldsymbol{\eta} C e^{Ct}  \mathbf{1}\,\, \boldsymbol{\eta}e^{Ct} (-C) \mathbf{1}
}{(\boldsymbol{\eta}e^{Ct}  \mathbf{1})^2}.
\end{equation}

We now describe second-order properties associated with each case. 
\paragraph*{Event-Stationary Case:}     
    The MAP is event-stationary\footnote{Sometimes an event-stationary MAP is referred to as an interval-stationary MAP, see for instance \cite{fischer1993markov}.} if 
    %there is an event at time $t=0$ and 
$\boldsymbol{\eta}=\boldsymbol{\alpha}$. In this case, the (generic) inter-event time is phase-type distributed, $PH(\boldsymbol{\alpha}, C)$ and thus has $k$-th moment:
$$
M_k=\E[T_n^k]=k! \boldsymbol{\alpha} (-C)^{-k}\mathbf{1}
%=k! \, \frac{1}{\lambda^*}( -\boldsymbol{\pi} C) (-C)^{-k} {\mathbf 1}
=(-1)^{k+1} \, k! \, \frac{1}{\lambda^*} \boldsymbol{\pi} \big(C^{-1}\big)^{k-1} {\mathbf 1},
$$
with the first and second moments (here represented in terms of $\boldsymbol{\pi}$ and $C$):
\[
M_1=\frac{1}{\lambda^*} \boldsymbol{\pi}  {\mathbf 1} = \frac{1}{\lambda^*},
\qquad
M_2=2 \frac{1}{\lambda^*} \boldsymbol{\pi} (-C)^{-1} {\mathbf 1}.
\]
The squared coefficient of variation (SCV) of events (intervals)  has a simple formula: 
\begin{equation}\label{Eq:SCV-Interval}
c^2+1 = \frac{M_2}{M_1^2} =  
\frac{-2 \, (1/\lambda^*) \, \boldsymbol{\pi} \, C^{-1} {\mathbf 1}}{(1/\lambda^*)^2}
= 2 \boldsymbol{\pi} C {\mathbf 1} \boldsymbol{\pi} C^{-1} {\mathbf 1}.
\end{equation}

\paragraph*{Time-Stationary Case:} 
 A MAP with parameters $(\boldsymbol{\eta}, C,D)$ is time-stationary \index{time-stationary MAP} if $\boldsymbol{\eta}=\boldsymbol{\pi}$. In the time-stationary  case ($\boldsymbol{\eta}=\boldsymbol{\pi}$), we have  (see \cite{asmussen2003applied}):
\begin{align}
\label{Eq:Mean}
\E[N(t)]&= {\boldsymbol{\pi}} D \mathbf{1}\,t,\\
\label{Eq:Var}
\Var\big(N(t)\big)&=\{{\boldsymbol{\pi}}D \mathbf{1}+2\, {\boldsymbol{\pi}}D D_Q^{\sharp} D \mathbf{1}\}\,t- 2 {\boldsymbol{\pi}}D D_Q^{\sharp} D_Q^{\sharp}(t) D \mathbf{1},
\end{align}
 where  $D_Q^{\sharp}$ is the \textit{deviation matrix}\index{deviation matrix} associated with  $Q$ defined by the following formula.
\begin{equation} 
\label{Eq:deviation}
D_Q^{\sharp}=\lim_{t\rightarrow\infty} D^{\sharp}_Q(t)=\int_0^{\infty}(e^{Qu}-\mathbf{1}{\boldsymbol{\pi}})\, du.
\end{equation}
 Note that in some sources, for instance  \cite{asmussen2003applied} and \cite{narayana1992first},  the variance formula \eqref{Eq:Var}  is presented in terms of the  matrix $Q^{-}:=(\mathbf{1}{\boldsymbol{\pi}} - Q)^{-1}$.  The relation between these two matrices is  $Q^{-}=D_Q^{\sharp} +\mathbf{1}{\boldsymbol{\pi}}$, see \cite{coolen2002deviation}.
  
Applying \eqref{Eq:Mean} and \eqref{Eq:Var}, we can write $d^2$ in terms of a MAP parameters as:
\begin{equation}\label{eq:d}
d^2= 1+
\frac{2}{\lambda^*}\, {\boldsymbol{\pi}}D D_Q^{\sharp} D \mathbf{1}.
\end{equation}
%

%%%%%%%%%%%%%%%%%%%%%%%%%%%%%%%%%%%%%
%%%%%%%%%%%%%%%%%%%%%%%%%%%%%%%%%%%%%
%%%%%%%%%%%%%%%%%%%%%%%%%%%%%%%%%%%%%
%%%%%%%%%%%%%%%%%%%%%%%%%%%%%%%%%%%%%
\paragraph*{MMPP:} A MAP with a diagonal matrix $D$ is an MMPP.
%; namely only ``self-transitions'' generate events. 
MMPPs correspond to doubly-stochastic Poisson processes (also known as Cox processes) where the modulating process is driven by a CTMC. 
MMPPs have been used extensively in stochastic modelling and analysis, see for example \cite{fischer1993markov}.
The parameters of an MMPP$_p$ are $D= \text{diag}(\lambda_i)$, where $\lambda_i \geq 0$ for $i=1, \ldots, p$,  and  $C=Q-D$. Here, $Q$ is the generator matrix of a CTMC. 
For MMPPs, \eqref{Eq:Mean} and \eqref{Eq:Var} can be simplified by using the following relations:
$$
\boldsymbol{\pi} D \mathbf{1}=\sum_{i=1}^p {\pi}_i \lambda_i,
\qquad D \mathbf{1}=\boldsymbol{\lambda}=(\lambda_1, \cdots, \lambda_p)^\prime,
\qquad \boldsymbol{\pi} D=({\pi}_1 \lambda_1, \cdots, {\pi}_p\lambda_p)\,.
$$

\paragraph*{MSPP:}  A MAP with a diagonal  matrix $C$ is is an MSPP.  For MSPP$_p$ events switch between $p$ Poisson processes with  rates $\lambda_1, \cdots, \lambda_p$, where each switch also incurs an event. Here as in MMPPs we denote the diagonal elements of $D$ via $\lambda_1, \cdots, \lambda_p$. However, unlike MMPPs, (irreducible) MSPPs don't have a diagonal $D$.  We also remark that the modulation in the MSPP is of a discrete nature and it occurs at certain event epochs of the counting process, whereas the modulation of the MMPP is performed at epochs without events. See \cite{artalejo2010markovian} and \cite{he2014fundamentals}.

As our research attempts have shown, analyzing MSPPs is considerably easier than MMPPs, because a diagonal $C$ is much easier to handle than a non-diagonal $C$ and in an (irreducible) MMPP, $C$ must be non-diagonal.

\paragraph*{Properties (I)-(IV) for MAPs:} Using the results above, for any irreducible MAP with matrices $C$ and $D$ we have that the main properties (I)-(IV) of this paper can be formulated as follows:
\begin{align}
 \label{Eq:d2}(I)&\qquad  {\boldsymbol{\pi}}D D_Q^{\sharp} D \mathbf{1} \ge 0, \\
 \label{Eq:HDR}
 (II)&\qquad \boldsymbol{\alpha} C e^{Ct} (-C) \mathbf{1}  \,\,
\boldsymbol{\alpha}e^{Ct}  \mathbf{1} + (\boldsymbol{\alpha} C e^{Ct}  \mathbf{1})^2 \leq 0 \qquad \forall t \ge 0, \\
   \label{Eq:c2} (III)&\qquad \boldsymbol{\pi} C {\mathbf 1} \boldsymbol{\pi} C^{-1} {\mathbf 1} \ge 1, \\
 \label{Eq:SO}(IV)&\qquad \boldsymbol{\pi} e^{C t} \mathbf{1} \ge \boldsymbol{\alpha} e^{C t} \mathbf{1},
\qquad \forall t \ge 0. 
\end{align}

%%%%%%%%%%%%%%%%%%%%%%%%%%%%%%%%%%%%%%%%%%%%%%%%
%%%%%%%%%%%%%%%%%%%%%%%%%%%%%%%%%%%%%%%%%%%%%%%%
%%%%%%%%%%%%%%%%%%%%%%%%%%%%%%%%%%%%%%%%%%%%%%%%
%%%%%%%%%%%%%%%%%%%%%%%%%%%%%%%%%%%%%%%%%%%%%%%%
%%%%%%%%%%%%%%%%%%%%%%%%%%%%%%%%%%%%%%%%%%%%%%%%
%%%%%%%%%%%%%%%%%%%%%%%%%%%%%%%%%%%%%%%%%%%%%%%%
%%%%%%%%%%%%%%%%%%%%%%%%%%%%%%%%%%%%%%%%%%%%%%%%
%%%%%%%%%%%%%%%%%%%%%%%%%%%%%%%%%%%%%%%%%%%%%%%%
%%%%%%%%%%%%%%%%%%%%%%%%%%%%%%%%%%%%%%%%%%%%%%%%
\section{Main Results}
\label{sec3}

We now present results for MSPP and MMPP$_2$ for properties (I)-(IV) as presented in the introduction. Establishing property (I), $d^2 \ge 1$ is not a difficult task for both MMPPs and MSPPs:

\begin{proposition}
\label{eq:pp22}
MMPP and MSPP processes have $d^2 \ge 1$.
\end{proposition}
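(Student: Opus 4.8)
The plan is to reduce everything to the single scalar inequality (I) in the form \eqref{Eq:d2}. Starting from \eqref{eq:d}, we have $d^2 = 1 + \frac{2}{\lambda^*}\,\boldsymbol{\pi}D D_Q^\sharp D\mathbf{1}$ with $\lambda^* > 0$, so $d^2 \ge 1$ is equivalent to $\boldsymbol{\pi}D D_Q^\sharp D\mathbf{1}\ge 0$. Inserting the integral form \eqref{Eq:deviation} of the deviation matrix and using $e^{Qu}\to\mathbf{1}\boldsymbol{\pi}$ gives
\[
\boldsymbol{\pi}D D_Q^\sharp D\mathbf{1}=\int_0^\infty\Big(\boldsymbol{\pi}D\,e^{Qu}\,D\mathbf{1}-(\lambda^*)^2\Big)\,du ,
\]
the integral converging absolutely because $e^{Qu}-\mathbf{1}\boldsymbol{\pi}$ decays exponentially for a finite irreducible generator $Q$.

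The crux is that for both model classes this integrand is a stationary autocovariance of a scalar functional of the phase process. Write $\mathbf{g}=D\mathbf{1}=-C\mathbf{1}$ for the vector of per-phase event rates, with entries $g_i$. For an MMPP, $D$ is diagonal, so $\boldsymbol{\pi}D=(\pi_1g_1,\dots,\pi_pg_p)$; for an MSPP, $D$ is not diagonal but $C$ is, and from $\boldsymbol{\pi}(C+D)=\mathbf{0}'$ we again get $\boldsymbol{\pi}D=-\boldsymbol{\pi}C=(\pi_1g_1,\dots,\pi_pg_p)$. Hence, with $\{X(t)\}$ the stationary CTMC of generator $Q$ started from $\boldsymbol{\pi}$,
\[
\boldsymbol{\pi}D\,e^{Qu}\,D\mathbf{1}=\sum_{i,j}\pi_i g_i\,(e^{Qu})_{ij}\,g_j=\E\big[g_{X(0)}\,g_{X(u)}\big],\qquad(\lambda^*)^2=\big(\E[g_{X(0)}]\big)^2,
\]
so $\boldsymbol{\pi}D D_Q^\sharp D\mathbf{1}=\int_0^\infty\Cov\big(g_{X(0)},g_{X(u)}\big)\,du$.

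It then remains to see that this time-integrated autocovariance is non-negative, which I would obtain from the identity
\[
\frac{1}{t}\,\Var\!\Big(\int_0^t g_{X(s)}\,ds\Big)=\int_{-t}^{t}\Big(1-\tfrac{|u|}{t}\Big)\Cov\big(g_{X(0)},g_{X(u)}\big)\,du\;\xrightarrow[t\to\infty]{}\;2\int_0^\infty\Cov\big(g_{X(0)},g_{X(u)}\big)\,du ,
\]
where passage to the limit is by dominated convergence using the exponential decay of the covariance; the left-hand side is a variance, hence $\ge0$, so the limit is $\ge0$ and $d^2\ge1$. For the MMPP one may instead simply note that it is a Cox process and condition $N(t)$ on the path of $X$, giving $\Var(N(t))=\E[N(t)]+\Var\big(\int_0^t\lambda_{X(s)}\,ds\big)\ge\E[N(t)]$; the displayed computation is the matrix-analytic shadow of this, and its virtue is that it equally covers the MSPP, which is \emph{not} a Cox process.

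The step I would single out as the only real obstacle is the MSPP one: since there $D$ is genuinely non-diagonal, the autocovariance structure is not visible directly, and the argument turns on trading $\boldsymbol{\pi}D$ for $-\boldsymbol{\pi}C$ and $D\mathbf{1}$ for $-C\mathbf{1}$ so that the diagonal matrix $C$ carries everything; equivalently, $\boldsymbol{\pi}D D_Q^\sharp D\mathbf{1}=\boldsymbol{\pi}(-C)D_Q^\sharp(-C)\mathbf{1}$ is exactly the quantity \eqref{Eq:d2} for the auxiliary MMPP with the same generator $Q$ and Poisson rates $g_i=-C_{ii}$, which reduces the MSPP case to the MMPP case. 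Everything else is routine convergence bookkeeping for a finite irreducible CTMC.
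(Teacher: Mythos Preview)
Your proof is correct. The paper's argument is terser and coincides with what you put in your closing paragraph: for the MMPP it simply invokes the classical Cox-process overdispersion fact (conditioning on the rate path gives $\Var N(t)=\E N(t)+\Var\int_0^t\lambda_{X(s)}\,ds$), and for the MSPP it uses exactly the swap you isolate, $\boldsymbol{\pi}D D_Q^\sharp D\mathbf{1}=\boldsymbol{\pi}(-C)D_Q^\sharp(-C)\mathbf{1}$, together with the observation that the MMPP result actually yields $\boldsymbol{\pi}\tilde D D_Q^\sharp\tilde D\mathbf{1}\ge0$ for \emph{every} non-negative diagonal $\tilde D$, and $-C$ is such a matrix for an MSPP. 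Your main line---recognising $\boldsymbol{\pi}D D_Q^\sharp D\mathbf{1}$ as the integrated stationary autocovariance of $g_{X(t)}$ and bounding it below via $\tfrac{1}{t}\Var\int_0^t g_{X(s)}\,ds\ge0$---is a self-contained alternative that dispenses with the external Cox-process reference and handles both classes in one stroke; what makes it go through uniformly is precisely your observation that $\boldsymbol{\pi}D$ has entries $\pi_i g_i$ in both cases (trivially for MMPP, and via $\boldsymbol{\pi}D=-\boldsymbol{\pi}C$ with diagonal $C$ for MSPP). The paper's route is shorter; yours is more informative, since it exhibits the asymptotic-variance mechanism that in fact underlies the cited Cox-process inequality.
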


\begin{proof}
This is a well-known result that for all doubly stochastic Poisson processes (Cox processes), $d^2 \ge 1$. So, we have the proof for an MMPP,  for instance  see Chapter 6 of \cite{kingman1993poisson}.

For an MSPP, using the fact that for a given MMPP, we have $d^2\geq 1$, results in:
\begin{equation}\label{eq:Ddiag}
{\boldsymbol{\pi}}D D_Q^{\sharp} D \mathbf{1} \geq 0,\quad \text{for any diagonal non-negative matrix $D$.}
\end{equation}
On the other hand,  all MAPs satisfy $
{\boldsymbol{\pi}}D D_Q^{\sharp} D \mathbf{1}={\boldsymbol{\pi}}(-C) D_Q^{\sharp} (-C) \mathbf{1}$. Since for an MSPP, $-C$  is a diagonal non-negative matrix, from \eqref{eq:Ddiag} we have  \eqref{Eq:d2}.
\end{proof}

It isn't difficult to show that property (II), DHR holds for MSPP:

\begin{proposition}
\label{eq:pp22}
For an MSPP the hazard rate of the stationary inter-event time is non-increasing.
\end{proposition}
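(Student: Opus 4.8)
The plan is to use the only special feature of an MSPP, namely that $C$ is diagonal, to reduce the phase-type inter-event time to a hyperexponential one, and then verify monotonicity of the hazard rate by a one-line estimate. Write $C=\mathrm{diag}(-c_1,\dots,-c_p)$, where $c_i=(-C\mathbf 1)_i=(D\mathbf 1)_i>0$ using $-C\mathbf 1=D\mathbf 1$. Then $e^{Ct}=\mathrm{diag}(e^{-c_1t},\dots,e^{-c_pt})$, and substituting into the hazard-rate formula of the event-stationary inter-event time $PH(\boldsymbol\alpha,C)$ (i.e.\ taking $\boldsymbol\eta=\boldsymbol\alpha$ and using $D\mathbf 1=-C\mathbf 1$) gives the scalar expression
\[
h(t)=\frac{\boldsymbol\alpha e^{Ct}(-C)\mathbf 1}{\boldsymbol\alpha e^{Ct}\mathbf 1}=\frac{\sum_{i=1}^p\alpha_i c_i e^{-c_it}}{\sum_{i=1}^p\alpha_i e^{-c_it}}=\sum_{i=1}^p w_i(t)\,c_i,\qquad w_i(t):=\frac{\alpha_i e^{-c_it}}{\sum_{j=1}^p\alpha_j e^{-c_jt}}.
\]
Thus $h(t)$ is the hazard rate of the mixture $\sum_i\alpha_i\,\mathrm{Exp}(c_i)$, a weighted average of the fixed rates $c_i$ with the tilted weights $w_i(t)$.

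From here there are two routes. One may simply invoke the classical fact that any finite mixture of exponential distributions has decreasing hazard rate; this applies verbatim and yields (II). I would instead give the self-contained computation using \eqref{eq:derHazard}: since $C$ and $e^{Ct}$ are diagonal, the numerator of $h'(t)$ equals
\[
-\Big(\sum_i\alpha_i c_i^2 e^{-c_it}\Big)\Big(\sum_i\alpha_i e^{-c_it}\Big)+\Big(\sum_i\alpha_i c_i e^{-c_it}\Big)^2,
\]
which is $\le 0$ by the Cauchy--Schwarz inequality. Dividing by $(\boldsymbol\alpha e^{Ct}\mathbf 1)^2\ge 0$ and rewriting in terms of the weights $w_i(t)$ gives the transparent identity $h'(t)=-\big(\sum_i w_i(t)c_i^2-(\sum_i w_i(t)c_i)^2\big)=-\Var_{w(t)}(c)\le 0$ for every $t\ge 0$, which is exactly condition \eqref{Eq:HDR} for MSPPs.

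I do not expect a genuine obstacle here: the entire content is the observation that diagonality of $C$ trivializes the phase-type structure, after which the monotonicity is the non-negativity of a variance. The only points worth a remark are that the argument is insensitive to whether some $\alpha_i$ vanish or some $c_i$ coincide, and that it incidentally shows the event-stationary inter-event time of an MSPP is hyperexponential, so that property (III), $c^2\ge 1$, follows immediately as well. This is precisely the contrast with the MMPP case, where $C$ must be non-diagonal and where, by the Telek--Horvath example, (II) can fail.
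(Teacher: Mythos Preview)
Your proof is correct and is essentially the same as the paper's: both diagonalize $e^{Ct}$, substitute into \eqref{eq:derHazard}/\eqref{Eq:HDR}, and observe that the resulting expression is $-\Var_{w(t)}(c)\le 0$ for the tilted weights $w_i(t)=\alpha_i e^{-c_it}/\sum_j\alpha_j e^{-c_jt}$ (the paper writes $p_i$ for your $w_i(t)$). Your added framing as a hyperexponential mixture and the alternative appeal to Cauchy--Schwarz are cosmetic variations on the same one-line variance argument.
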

\begin{proof}
%
%$$
%h^{\prime}(t)= \frac{\boldsymbol{\alpha} C e^{Ct} (-C) \mathbf{1}  \,\,
%\boldsymbol{\alpha}e^{Ct}  \mathbf{1} - \boldsymbol{\alpha} C e^{Ct}  \mathbf{1}\,\, \boldsymbol{\alpha}e^{Ct} (-C) \mathbf{1}
%}{(\boldsymbol{\alpha}e^{Ct}  \mathbf{1})^2}.
%$$
%  The sign of $h^{\prime}(t)$ just depends on the nominator. We can write the nominator as:
%\begin{equation}\label{eq:dif1}
%-\boldsymbol{\alpha} C^2 e^{Ct}  \mathbf{1}\,\, \boldsymbol{\alpha}  e^{Ct}  \mathbf{1} + ( \boldsymbol{\alpha} C e^{Ct}  \mathbf{1}  )^2.
%\end{equation}
Denote the diagonal matrix $C$ with $C=diag(-c_i)$ and the positive elements of the column vector $e^{Ct}  \mathbf{1}$ with $\mathbf{u}$.
So, Eq.~\eqref{Eq:HDR} can be written element-wise as:
\begin{equation}\label{eq:dif2}
-(\sum _{i=1}^p\alpha_i c_i^2 u_i)\,(\sum_{i=1}^p \alpha_i u_i)+(\sum_{i=1}^p \alpha_i c_i u_i)^2.
\end{equation}
Denoting $v_i=\alpha_i u_i$ and assuming $p_i=\frac{v_i}{\sum_{i=1}^p v_i}$ results in:
$$
-(\sum _{i=1}^p c_i^2 p_i)+(\sum _{i=1}^p c_i p_i)^2.
$$
The above expression can be viewed as the minus variance of a random variable that takes values $c_i$ with probability $p_i$. Therefore, we have \eqref{Eq:HDR}.

\end{proof}

However, somewhat surprisingly, MMPPs don't necessarily possess DHR. An exception is MMPP$_2$ as shown in Proposition~\ref{prop:mmpp2}. However for higher order MMPPs DHR doesn't always hold. The gist of the following example was communicated to us by Milkos Telek and Illes Horvath. Set 
\begin{equation}
\label{Example}
Q =\left(
\begin{array}{cccc}
-1 & 1 & 0 & 0 \\ 
0 & -1 & 1 & 0\\
0 & 0 & -1 & 1\\
1& 0&0 & -1 
  \end{array}
\right)
\qquad
\mbox{and}
\qquad
D =\left(
\begin{array}{cccc}\displaystyle
0.01 & 0 &\, \,0 & \,\,\,\,0 \\ 
0 & 0.01 &\, \,0 &\, \,\,\,0\\
0 & 0 & \,\,1 & \,\,\,\,0\\
0& 0&\,\,0 & \,\,\,\,1
  \end{array}
\right).
%D = \mbox{diag}\,(0.01,0.01,1,1)
\end{equation}
As shown in Figure~\ref{Fig:example}, the hazard rate function for an MMPP with the above matrices is not monotone. Hence at least for general MMPPs, trying to show (III), $c^2\ge 1$, via hazard rates is not a viable avenue.
%%%%%%%%%%%%%%%%%
\begin{figure}
\center
\includegraphics[scale=0.4]{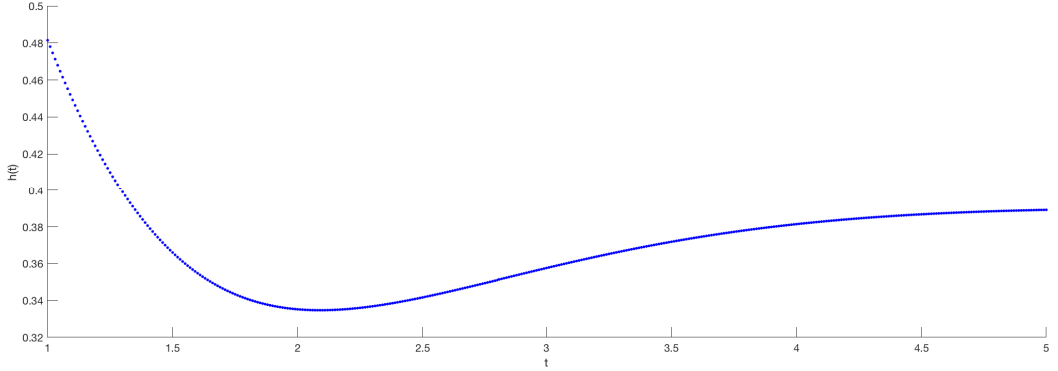}
\caption{{{\small The hazard rate of the MMPP in \eqref{Example} is not monotone.}}}
\label{Fig:example}
\end{figure}

%If you include EPS (encapsulated postscript) figures in your paper,
%then please use the following commands:
%\begin{figure}
%\begin{center}
%\includegraphics{.eps}
%\caption{Caption text.}\label{}
%\end{center}
%\end{figure}

Since hazards rates don't appear to be a viable paths for establishing (III) for MMPPs, an alternative may be to consider the stochastic order (IV). Starting with MSPPs, we see that this property holds.

\begin{proposition}
\label{eq:msppSO}
For an MSPP $T_1^{\boldsymbol{\pi}} \ge_{\mbox{st}} T_1^{\boldsymbol{\alpha}}$.
\end{proposition}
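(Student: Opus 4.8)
The plan is to establish property (IV) in the pointwise form \eqref{Eq:SO}, namely $\boldsymbol{\pi} e^{Ct}\mathbf 1 \ge \boldsymbol{\alpha} e^{Ct}\mathbf 1$ for all $t\ge 0$; by \eqref{eq:T1dist} this is exactly the survival-function inequality $\Pl(T_1^{\boldsymbol{\pi}}>t)\ge\Pl(T_1^{\boldsymbol{\alpha}}>t)$ that defines the stochastic order. For an MSPP the matrix $C$ is diagonal, so write $C=\operatorname{diag}(-\mu_1,\dots,-\mu_p)$ with $\mu_i=-C_{ii}=(D\mathbf 1)_i$; irreducibility forces $\mu_i>0$ for every $i$. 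Then $e^{Ct}\mathbf 1=(e^{-\mu_1 t},\dots,e^{-\mu_p t})'$, and the two sides of \eqref{Eq:SO} become the ``hyperexponential tails'' $\sum_i\pi_i e^{-\mu_i t}$ and $\sum_i\alpha_i e^{-\mu_i t}$.

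The second ingredient is the $\boldsymbol{\pi}$--$\boldsymbol{\alpha}$ relation. Since $C$ is diagonal, so is $(-C)^{-1}=\operatorname{diag}(1/\mu_1,\dots,1/\mu_p)$, and the identity $\boldsymbol{\pi}=\lambda^*\boldsymbol{\alpha}(-C)^{-1}$ from \eqref{Eq:pi-alpha} reads componentwise as $\pi_i=\lambda^*\alpha_i/\mu_i$, i.e.\ $\alpha_i=\pi_i\mu_i/\lambda^*$ with $\lambda^*=\boldsymbol{\pi} D\mathbf 1=\sum_i\pi_i\mu_i$ (the MSPP counterpart of the familiar MMPP relation $\alpha_i=\pi_i\lambda_i/\lambda^*$). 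Substituting this into \eqref{Eq:SO} reduces the proposition to the scalar inequality
\[
\sum_{i=1}^p\pi_i\bigl(\lambda^*-\mu_i\bigr)e^{-\mu_i t}\ge 0,\qquad t\ge 0.
\]

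To finish, I would replace $\lambda^*$ by $\sum_k\pi_k\mu_k$ (using $\sum_k\pi_k=1$), rewrite the left-hand side as the double sum $\sum_{j,k}\pi_j\pi_k(\mu_k-\mu_j)e^{-\mu_j t}$, and symmetrize in $j\leftrightarrow k$ to obtain $\tfrac12\sum_{j,k}\pi_j\pi_k(\mu_k-\mu_j)\bigl(e^{-\mu_j t}-e^{-\mu_k t}\bigr)$. Since $\mu\mapsto e^{-\mu t}$ is non-increasing for $t\ge 0$, the factors $(\mu_k-\mu_j)$ and $(e^{-\mu_j t}-e^{-\mu_k t})$ always share the same sign, so every summand is non-negative and the inequality follows. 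Equivalently, this is the statement $\Cov(M,e^{-Mt})\le 0$ for the random variable $M$ equal to $\mu_i$ with probability $\pi_i$ — a Chebyshev-type monotone-covariance inequality — reflecting that $\boldsymbol{\alpha}$ is the $\mu$-size-biased version of $\boldsymbol{\pi}$ and hence stochastically favours the faster phases. I do not foresee a genuine obstacle here; the only point deserving attention is that, unlike for an MMPP, the matrix $D$ of an MSPP is \emph{not} diagonal, but this is irrelevant to the argument since only $(-C)^{-1}$ enters the computation.
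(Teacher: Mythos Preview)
Your proof is correct and arrives at the same scalar inequality as the paper, $\sum_i \pi_i(\lambda^*-\mu_i)e^{-\mu_i t}\ge 0$, but proves it by a different device. The paper orders the rates $c_1\le\cdots\le c_p$, observes that the sequence $\pi_i-\alpha_i=\frac{\pi_i}{\lambda^*}(\lambda^*-c_i)$ changes sign exactly once (from non-negative to negative) as $i$ increases, and then argues by contradiction using the monotonicity of $e^{-c_i t}$ in $i$. Your route---symmetrising the double sum to $\tfrac12\sum_{j,k}\pi_j\pi_k(\mu_k-\mu_j)(e^{-\mu_j t}-e^{-\mu_k t})$, i.e.\ reading the inequality as $\Cov(M,e^{-Mt})\le 0$ for the $\boldsymbol{\pi}$-distributed random variable $M$---is Chebyshev's monotone-covariance inequality applied termwise. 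Your argument is shorter, avoids the ordering and the contradiction, and makes transparent both why the inequality holds (two monotone functions of the same variable have signed covariance) and what the structural reason is ($\boldsymbol{\alpha}$ is the $\mu$-size-biased version of $\boldsymbol{\pi}$). The paper's approach, on the other hand, isolates the single-crossing property of $\pi_i-\alpha_i$, which is an appealing structural fact in its own right even if it is not needed for the present inequality.
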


\begin{proof}

Using \eqref{Eq:SO}, the claim is,
\begin{equation}
\label{eq:351}
(\boldsymbol{\pi} - \boldsymbol{\alpha})e^{C t} \mathbf{1} \ge  0,
\qquad \forall t \ge 0.
\end{equation}
Without loss of generality we assume  that there is an order  $0<c_1\leq c_2 \leq \cdots \leq c_p$ (with $c_i \neq c_j$ for some $i,j$) for diagonal elements of matrix $(-C)$.
There is a possibility  that for $1 < p^{\prime} < p$, $0=c_1=c_2 = \ldots =c_{p^{\prime}-1}$, and $0<c_{p^{\prime}}$, however,
in the rest of the proof, we assume that $p^{\prime}=1$,  meaning that all $c_i$ are strictly positive. Adapting to the case of $p^{\prime}>1$ is straightforward. 

Now, $\{\lambda^*-c_i\}_{i=1, \cdots, p}$ is a non-increasing sequence and therefore in the sequence $\{\pi_i-\alpha_i\}=\{\frac{\pi_i}{\lambda^*}(\lambda^*-c_i)\}$ when an element $\pi_k-\alpha_k$ is negative, all the elements $\pi_i-\alpha_i$ for $i\geq k$ are negative.  
Moreover, both $\boldsymbol{\pi}$ and $\boldsymbol{\alpha}$ are probability vectors, so  $(\boldsymbol{\pi}-\boldsymbol{\alpha})\mathbf{1}=\sum_i(\pi_i-\alpha_i)=0$. Therefore,
 at least the first element in the sequence $\{\pi_i-\alpha_i\}=\{\frac{\pi_i}{\lambda^*}(\lambda^*-c_i)\}$ is positive. 
Hence, there exists an index $1 < k \leq  p$  such that $\pi_i-\alpha_i$ for $i=1, \cdots, k-1$ is non-negative and for $i=k,\cdots, p$ is negative. Therefore,  we have:

\begin{align*}
  ({\pi} - {\boldsymbol\alpha}) e^{Ct} \mathbf{1}
 &=\underbrace{\sum_{i=1}^{k-1}({\pi_i}-{ \alpha_i})e^{-c_i t}}_{\text{non-negative}}+\underbrace{\sum_{i=k}^p({\pi_i}-{ \alpha_i})e^{-c_i t}}_{\text{negative}}\\
& =\underbrace{\sum_{i=1}^{k-1}({\pi_i}-{ \alpha_i})e^{-c_i t}}_{\text{non-negative}}-\underbrace{\sum_{i=k}^p({\alpha_i}-{\pi_i})e^{-c_i t}}_{\text{non-negative}}.
 \end{align*}
 Assume: $({\pi} - {\alpha}) e^{Ct} \mathbf{1} <0$ :
 \begin{equation}
 \label{Eq:neg}
 \sum_{i=1}^{k-1}({\pi_i}-{\alpha_i})e^{-c_i t}
< 
\sum_{i=k}^p({\alpha_i}-{\pi_i})e^{-c_i t}.
 \end{equation}
 Then, since $0<c_1\leq c_2 \leq \cdots \leq c_p\,$,  we have 
 $e^{-c_1 t} \geq e^{-c_2 t} \geq \cdots  \geq e^{-c_p t} $. Now from \eqref{Eq:neg} we can conclude that:
 $$
 \sum_{i=1}^{k-1}({\pi_i}-{ \alpha_i})e^{-c_{k-1} t} < \sum_{i=k}^p({\alpha_i}-{\pi_i})e^{-c_k t}.
 $$
Using the fact that $\sum_{i=k}^p({\alpha_i}-{\pi_i}) =\sum_{i=1}^{k-1}({\pi_i}-{ \alpha_i})$, results in:
$$
e^{-c_{k-1}t} \,\sum_{i=1}^{k-1}({\pi_i}-{ \alpha_i})
< 
e^{-c_k t}\, \sum_{i=1}^{k-1}({\pi_i}-{ \alpha_i}),
 $$
 which is not true.
Consequently, the assumption $({\pi} - {\alpha}) e^{Ct} \mathbf{1} <0$ is not true and hence  \eqref{Eq:SO} holds.

\end{proof}

Hence via Lemma~\ref{lem:soscv} or alternatively via the DHR property in Proposition~\ref{eq:pp22} we have:

\begin{corollary}
\label{eq:msppSO}
For an MSPP $c^2 \ge 1$.
\end{corollary}

In fact, for MSPPs this is an easy result and it can also be proved independently by using the Cauchy-Schwarz inequality. Further, we can find an upper bound:
\begin{proposition}
\label{prop:scvbounds}
 An MSPP with diagonal matrix $C=-\text{diag}(c_i)$ for $i=1,\cdots, p$  satisfies
$$
1 \leq c^2 \leq 2\,\frac{\kappa^2}{\gamma^2}-1,
$$ 
where $\kappa=\frac{\min{c_i}+\max{c_i}}{2}$ and $\gamma=\sqrt{(\min{c_i})(\max{c_i})}$.  
\end{proposition}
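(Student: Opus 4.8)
The plan is to rewrite the SCV of an MSPP, as given by \eqref{Eq:SCV-Interval}, as a quantity of the form $\E[Y]\,\E[1/Y]$ for a single scalar random variable $Y$, and then to bound this product by two classical moment inequalities. Since $C=-\mathrm{diag}(c_i)$ for an MSPP, we have $\boldsymbol\pi C\mathbf 1=-\sum_{i=1}^p\pi_ic_i$ and $\boldsymbol\pi C^{-1}\mathbf 1=-\sum_{i=1}^p\pi_i/c_i$, so \eqref{Eq:SCV-Interval} becomes
\[
c^2+1=2\,\big(\boldsymbol\pi C\mathbf 1\big)\big(\boldsymbol\pi C^{-1}\mathbf 1\big)=2\Big(\sum_{i=1}^p\pi_ic_i\Big)\Big(\sum_{i=1}^p\frac{\pi_i}{c_i}\Big)=2\,\E[Y]\,\E\big[1/Y\big],
\]
where $Y$ takes the value $c_i$ with probability $\pi_i$. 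Everything then reduces to bounding $\E[Y]\,\E[1/Y]$ from below and above, using only that $Y$ is supported in $[m,M]$ with $m:=\min_ic_i$ and $M:=\max_ic_i$.

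For the lower bound I would invoke Cauchy--Schwarz (equivalently the arithmetic--harmonic mean inequality) in the form $1=\big(\E[\sqrt Y\cdot Y^{-1/2}]\big)^2\le\E[Y]\,\E[1/Y]$, which gives $c^2\ge1$ and re-proves Corollary~\ref{eq:msppSO} for this case. For the upper bound the key ingredient is the Kantorovich inequality $\E[Y]\,\E[1/Y]\le(m+M)^2/(4mM)$, which I would prove in two lines: from $(Y-m)(M-Y)\ge0$ one gets the pointwise bound $Y+mM\,Y^{-1}\le m+M$; taking expectations yields $\E[Y]+mM\,\E[1/Y]\le m+M$, and then AM--GM, $2\sqrt{mM\,\E[Y]\,\E[1/Y]}\le\E[Y]+mM\,\E[1/Y]$, gives the claim. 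Substituting back and using $m+M=2\kappa$ and $mM=\gamma^2$,
\[
c^2+1=2\,\E[Y]\,\E[1/Y]\le\frac{(m+M)^2}{2mM}=\frac{(2\kappa)^2}{2\gamma^2}=\frac{2\kappa^2}{\gamma^2},
\]
and rearranging gives $c^2\le2\kappa^2/\gamma^2-1$.

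I do not expect a genuine obstacle here: the only non-routine step is the Kantorovich inequality, whose short self-contained proof is sketched above, and the rest is the algebraic identification via \eqref{Eq:SCV-Interval}. As an optional remark worth including, the upper bound is tight: equality in the AM--GM step forces $Y$ to be the two-point distribution with mass $1/2$ on $m$ and $1/2$ on $M$, and such a $\boldsymbol\pi$ is realized, for instance, by the two-state MSPP with $c_1=m$, $c_2=M$ and $D$ chosen so that $Q$ is symmetric; hence $2\kappa^2/\gamma^2-1$ is the exact supremum of $c^2$ over MSPPs with the prescribed rate set $\{c_i\}$.
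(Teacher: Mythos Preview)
Your proof is correct and follows essentially the same route as the paper: rewrite $c^2+1$ via \eqref{Eq:SCV-Interval} as $2\big(\sum_i\pi_ic_i\big)\big(\sum_i\pi_i/c_i\big)$, then bound this product below by Cauchy--Schwarz and above by Kantorovich's inequality. The only differences are cosmetic or additive---you phrase the sums as $\E[Y]\,\E[1/Y]$, you supply a short self-contained proof of Kantorovich (the paper simply cites \cite{steele2004cauchy}), and you add a tightness remark not present in the paper.
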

\begin{proof}
From Eq. \eqref{Eq:SCV-Interval}, we have  $c^2 + 1= 2(\boldsymbol{\pi} C {\mathbf 1} \boldsymbol{\pi} C^{-1} {\mathbf 1})$. 
 For $C=-\text{diag}(c_i)$, we have $C^{-1}=-\text{diag}(\frac{1}{c_i})$ and so,
 \[
 \frac{c^2 +1}{2}=\boldsymbol{\pi} C {\mathbf 1} \boldsymbol{\pi} C^{-1} {\mathbf 1} =\big(\sum_{i=1}^p\pi_i c_i\big)\big(\sum_{i=1}^p\pi_i \frac{1}{c_i}\big). 
\]
On the other hand, from the Cauchy-Schwarz inequality and the Kantorovich's Inequality (see \cite{steele2004cauchy}), we have:
$$
1=\Big[\sum_{i=1}^p\pi_i \,(c_i)^{\frac{1}{2}}\, (\frac{1}{c_i})^{\frac{1}{2}}\Big]^2 \leq \big(\sum_{i=1}^p\pi_i c_i\big)\big(\sum_{i=1}^p\pi_i \frac{1}{c_i}\big) \le \frac{\kappa^2}{\gamma^2}.
$$
%Now, consider that $m :=\min c_i$ and $M :=\max c_i$. Without loss of generality and by multiplying $m$, $M$, and all $c_i$, $i=1, \cdots, p$ by a positive constant, we can consider that $M=\frac{1}{m}$ (or $\gamma=1$). So, we have for $i=1, \cdots, p$: $c_i + \frac{1}{c_i} \leq m + \frac{1}{m} = 2 \kappa$ which results in (using the fact that $\sum_{i=1}^p \pi_i=1$):
%$$
%\Big(\sum_{i=1}^p \pi_i c_i\Big) + \Big(\sum_{i=1}^p \pi_i \frac{1}{c_i}\Big)= \sum_{i=1}^p \pi_i \big(c_i+\frac{1}{c_i}\big)\leq 2 \kappa.
%$$
%So, since the arithmetic mean of non-negative real valued numbers is always greater than their geometric mean, we have:
%$$
%\Big[ \Big(\sum_{i=1}^p \pi_i c_i\Big) \Big(\sum_{i=1}^p \pi_i \frac{1}{c_i}\Big)\Big]^{\frac{1}{2}}\leq \frac{\Big(\sum_{i=1}^p \pi_i c_i\Big)+ \Big(\sum_{i=1}^p \pi_i \frac{1}{c_i}\Big)}{2} \leq \kappa.
%$$
%Therefore, 
%$$
%\Big(\sum_{i=1}^p \pi_i c_i\Big) \Big(\sum_{i=1}^p \pi_i \frac{1}{c_i}\Big) \leq \kappa^2.
%$$ 
%The above inequality  is known as the Kantorovich's Inequality (see \cite{steele2004cauchy}) and is written for $\gamma \neq 1$ as:
%\begin{equation}\label{Cauchy2}
%\big(\sum_{i=1}^p\pi_i c_i\big)\big(\sum_{i=1}^p\pi_i \frac{1}{c_i}\big) \leq \frac{\kappa^2}{\gamma^2}.
%\end{equation}
Combination of the above two equations results in:  
$$
 1 \leq \frac{c^2 +1}{2} \leq \frac{\kappa^2}{\gamma^2},
$$
which completes the proof.

\end{proof}

%%%%%%%%%%%%%%%%
%%%%%%%%%%%%%%%%

However, for MMPPs while we believe the result is true (see numerical evidence in the next section) we don't have a general proof for properties (III) or (IV). Still, for two state MMPPs (MMPP$_2$) things are easier and we are able to show that all properties (I)-(IV) hold:

\begin{proposition}
\label{prop:mmpp2}
For a two-state MMPP$_2$, $c^2>1$ and $d^2>1$, $h(t)$ is DHR and the stochastic order $T_1^{\boldsymbol{\pi}} \ge_{\mbox{st}} T_1^{\boldsymbol{\alpha}}$ holds.
\end{proposition}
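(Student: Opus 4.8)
The plan is to reduce everything to an explicit two-parameter computation. A two-state MMPP is characterized by the generator of the modulating chain and the two Poisson rates, so write
\[
Q = \begin{pmatrix} -q_1 & q_1 \\ q_2 & -q_2 \end{pmatrix},
\qquad
D = \begin{pmatrix} \lambda_1 & 0 \\ 0 & \lambda_2 \end{pmatrix},
\qquad
C = Q - D = \begin{pmatrix} -(q_1+\lambda_1) & q_1 \\ q_2 & -(q_2+\lambda_2) \end{pmatrix}.
\]
Assume irreducibility ($q_1,q_2>0$) and that the process is not a plain Poisson process, i.e. $\lambda_1 \neq \lambda_2$; otherwise all four inequalities are equalities. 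First I would record $\boldsymbol\pi = (q_2, q_1)/(q_1+q_2)$, compute $\lambda^* = \boldsymbol\pi D \mathbf 1 = (q_2\lambda_1 + q_1\lambda_2)/(q_1+q_2)$, and then $\boldsymbol\alpha = \boldsymbol\pi D/\lambda^*$ via \eqref{Eq:pi-alpha}. Because $2\times 2$ matrices are involved, $C^{-1}$, the eigenvalues of $C$ (both real and negative, since $C$ is a sub-generator of an irreducible PH distribution), and $e^{Ct}$ are all available in closed form.

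For (III), $c^2 \geq 1$, I would use \eqref{Eq:SCV-Interval}, $c^2 + 1 = 2\,\boldsymbol\pi C\mathbf 1\,\boldsymbol\pi C^{-1}\mathbf 1$, substitute the $2\times 2$ expressions, clear denominators, and show the resulting polynomial inequality factors as a perfect square times a positive quantity — I expect it to collapse to something proportional to $(\lambda_1-\lambda_2)^2$, giving strict inequality under the non-degeneracy assumption. For (I), $d^2 \geq 1$, the cleanest route is to invoke Proposition~\ref{eq:pp22} directly (every MMPP is a Cox process), and the strictness again follows from $\lambda_1 \neq \lambda_2$; alternatively one can expand \eqref{eq:d} with the explicit $2\times2$ deviation matrix $D_Q^\sharp$. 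For (IV), the stochastic order, I would prove \eqref{Eq:SO}, i.e. $(\boldsymbol\pi-\boldsymbol\alpha)e^{Ct}\mathbf 1 \geq 0$ for all $t\geq 0$: writing $e^{Ct}\mathbf 1 = a\,e^{-\mu_1 t}\mathbf v_1 + b\,e^{-\mu_2 t}\mathbf v_2$ in the eigenbasis of $C$ with $0<\mu_1<\mu_2$, the function $g(t) := (\boldsymbol\pi-\boldsymbol\alpha)e^{Ct}\mathbf 1$ is a linear combination of two decaying exponentials; such a function has at most one sign change, it satisfies $g(\infty)=0$ and $\int_0^\infty g(t)\,dt = \boldsymbol\pi(-C)^{-1}\mathbf 1 - \boldsymbol\alpha(-C)^{-1}\mathbf 1$. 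Using \eqref{Eq:pi-alpha}, $\boldsymbol\pi = \lambda^*\boldsymbol\alpha(-C)^{-1}$, so $\boldsymbol\pi(-C)^{-1}\mathbf 1 = \lambda^*\boldsymbol\alpha(-C)^{-2}\mathbf 1 = \tfrac12\lambda^* M_2 / \lambda^* \cdot (\dots)$; more directly $\int_0^\infty g = M_1^\pi - M_1^\alpha = \tfrac12 c^2 M_1^\alpha \geq 0$ by Lemma~\ref{lem:soscv} and part (III). A nonnegative integral plus at most one sign change plus $g(\infty)=0$ forces $g\geq0$ everywhere, which is (IV). (One should also note $g(0) = (\boldsymbol\pi-\boldsymbol\alpha)\mathbf 1 = 0$, so the sign-change argument must be run carefully on $(0,\infty)$.)

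For (II), DHR, I would work from \eqref{Eq:HDR}/\eqref{eq:derHazard} with $\boldsymbol\eta = \boldsymbol\alpha$. With $e^{Ct}\mathbf 1 = a\,e^{-\mu_1 t}\mathbf v_1 + b\,e^{-\mu_2 t}\mathbf v_2$, both numerator entries and $\boldsymbol\alpha e^{Ct}\mathbf 1$ become combinations of $e^{-\mu_1 t}$ and $e^{-\mu_2 t}$; substituting $s = e^{-(\mu_2-\mu_1)t} \in (0,1]$ reduces $\mathrm{sign}\,h'(t)$ to the sign of a quadratic in $s$ with coefficients that are polynomials in $q_1,q_2,\lambda_1,\lambda_2$. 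The task is then to check that this quadratic is nonpositive on $(0,1]$; I expect its leading and constant coefficients to be manifestly signed and the discriminant or the value at the endpoints to settle the intermediate range, again with a factor $(\lambda_1-\lambda_2)^2$ governing strictness. The main obstacle is precisely this DHR computation: it is the only one of the four that does not reduce to a one-line structural argument, and the algebra — computing the eigenvalues $\mu_{1,2}$ of the $2\times2$ matrix $C$ symbolically and then verifying the sign of a quadratic form in the exponential variable — is where care is needed. If the brute-force quadratic is unwieldy, a fallback is to note that a $PH_2$ distribution (hyperexponential or, more generally, any order-2 phase-type law with real eigenvalues) is a mixture or convolution structure whose hazard rate is known to be monotone, and to identify the event-stationary inter-event law of an MMPP$_2$ as such a distribution — since $C$ has two real negative eigenvalues, $T_1^{\boldsymbol\alpha}$ is an order-2 phase-type variable with real spectrum, and such variables are known to be DHR when the relevant parameter ordering holds.
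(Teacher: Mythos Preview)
Your plan for (I) and (III) matches the paper: both are done by direct $2\times2$ symbolic computation, and the paper indeed gets closed forms in which the excess over~$1$ is a positive multiple of $(\lambda_1-\lambda_2)^2$.

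For (IV) your route differs from the paper's. The paper simply evaluates $(\boldsymbol\pi-\boldsymbol\alpha)e^{Ct}\mathbf 1$ in closed form and reads off its sign. Your argument is more structural: from $g(0)=0$ and the two-exponential form you get $g(t)=a\bigl(e^{-\mu_1 t}-e^{-\mu_2 t}\bigr)$, whose sign on $(0,\infty)$ is constant, and then $\int_0^\infty g = \E[T_1^{\boldsymbol\pi}]-\E[T_1^{\boldsymbol\alpha}]\ge 0$ via Lemma~\ref{lem:soscv} and the already-established (III) fixes $a\ge0$. This is correct and in fact gives a cleaner conceptual link (it shows (III)$\Rightarrow$(IV) in the two-state case without further computation), whereas the paper's explicit formula gives more information but requires the eigenvalue algebra.

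For (II) there is a soft spot. The paper's direct computation is far simpler than you anticipate: the numerator of $h'(t)$ in \eqref{Eq:HDR} collapses to a \emph{single} exponential, a negative multiple of $e^{-Bt}$ with $B=\sigma_1+\sigma_2+\lambda_1+\lambda_2=\operatorname{tr}(-C)$, so no quadratic-in-$s$ analysis is needed. Your primary approach would still work but is heavier than necessary. Your fallback, however, is not safe: it is \emph{not} true that every order-$2$ phase-type distribution with real spectrum is DHR --- a generalized Erlang (convolution of two exponentials) has real negative eigenvalues and is IFR --- so appealing to ``known'' monotonicity of $PH_2$ hazard rates without identifying precisely which subclass $(\boldsymbol\alpha,C)$ falls into would be a gap. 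Stick with the direct computation for (II).
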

%%%%%%%%%%%%%%%
\begin{proof}
Consider an MMPP$_2$ with parameters  
\[
D=\left(\begin{array}{cc}
\lambda_1&0\\
0& \lambda_2
\end{array}\right)
\qquad
\mbox{and}
\qquad 
C=\left(\begin{array}{cc}
-\sigma_1-\lambda_1 & \sigma_1\\
\sigma_2 & -\sigma_2-\lambda_2
\end{array}\right).
\]
Then,  $\boldsymbol{\pi}=\frac{1}{\sigma_1+\sigma_2}(\sigma_2,\, \sigma_1)$. As in \cite{heffes1986markov}, evaluation of the transient deviation matrix through (for e.g.) Laplace transform inversion yields:
$$
\frac{\Var(N(t))}{\E[N(t)]}=1+\frac{2\sigma_1\sigma_2(\lambda_1-\lambda_2)^2}{(\sigma_1+\sigma_2)^2(\lambda_1\sigma_2+\lambda_2\sigma_1)}-\frac{2\sigma_1\sigma_2(\lambda_1-\lambda_2)^2}{(\sigma_1+\sigma_2)^3(\lambda_1\sigma_2+\lambda_2\sigma_1)t}(1-e^{-(\sigma_1+\sigma_2)t}).
$$
Therefore from~\eqref{eq:3535}, we have
$$
d^2=1+ \frac{2\sigma_1\sigma_2(\lambda_1-\lambda_2)^2}{(\sigma_1+\sigma_2)^2(\lambda_1\sigma_2+\lambda_2\sigma_1)}.
$$  

Further, explicit computation yields,
$$
c^2 = 1 + \frac{2\sigma_1\sigma_2(\lambda_1-\lambda_2)^2}{(\sigma_1+\sigma_2)^2(\lambda_2\sigma_1+\lambda_1(\lambda_2+\sigma_2))}.
$$
Thus it is evident that the MMPP$_2$ has $d^2 >1\, , c^2 > 1$
%strictly bursty 
as long as $\lambda_1 \neq \lambda_2$ and $d^2=c^2=1$ when $\lambda_1 = \lambda_2$. 

For DHR and the stochastic order, first we note that for an MMPP$_2$ with the above parameters, $\boldsymbol{\alpha}=\frac{1}{\sigma_1 \lambda_1+\sigma_2 \lambda_2}(\sigma_1 \lambda_1, \sigma_2 \lambda_2)$. 
By setting $B= \sigma_1 +\sigma_2 + \lambda_1+\lambda_2$ and $A=\sigma_2 \lambda_1 +\lambda_2(\sigma_1+\lambda_1)$, after some simplification, Eq.~\eqref{Eq:HDR}  is given by:
$$
- \frac{A e^{-Bt}\sigma_1 \sigma_2(\lambda_1-\lambda_2)^2}{(\sigma_2 \lambda_1+\sigma_1\lambda_2)^2},
$$
which is strictly negative for $\lambda_1 \neq \lambda_2$ and is zero for $\lambda_1 = \lambda_2$.
For the stochastic order, from Eq.~\eqref{Eq:SO}, we have:
$$
(\boldsymbol{\pi} - \boldsymbol{\alpha})e^{C t} \mathbf{1}= \frac{e^{-\frac{t}{2} \big(B+\sqrt{B^2-4A} \big)
}\big(-1+e^{t\sqrt{B^2-4A}}\big)\sigma_1 \sigma_2 (\lambda_1-\lambda_2)^2 }{(\sigma_1+\sigma_2) (\sigma_1 \lambda_2+\sigma_2 \lambda_1) \sqrt{B^2-4A}},
$$
which is strictly positive for $\lambda_1 \neq \lambda_2$ and is zero for $\lambda_1 = \lambda_2$.

\vspace{30pt}

\end{proof}

%%%%%%%%%%%%%%%%%%%%%%%%%%%%%%%%%%%%%%%%%%%%%%%%
%%%%%%%%%%%%%%%%%%%%%%%%%%%%%%%%%%%%%%%%%%%%%%%%
%%%%%%%%%%%%%%%%%%%%%%%%%%%%%%%%%%%%%%%%%%%%%%%%
%%%%%%%%%%%%%%%%%%%%%%%%%%%%%%%%%%%%%%%%%%%%%%%%
%%%%%%%%%%%%%%%%%%%%%%%%%%%%%%%%%%%%%%%%%%%%%%%%
%%%%%%%%%%%%%%%%%%%%%%%%%%%%%%%%%%%%%%%%%%%%%%%%
%%%%%%%%%%%%%%%%%%%%%%%%%%%%%%%%%%%%%%%%%%%%%%%%
%%%%%%%%%%%%%%%%%%%%%%%%%%%%%%%%%%%%%%%%%%%%%%%%
%%%%%%%%%%%%%%%%%%%%%%%%%%%%%%%%%%%%%%%%%%%%%%%%
\section{Conjectures for MMPP}
\label{sec4}

We embarked on this research due to the folklore assumption that for MMPP, $c^2 \ge 1$ (III). Initially we believed that it is easy to verify, however to date there isn't a known proof for an arbitrary irreducible MMPP. Still, we conjecture that both (III) and (IV) hold for MMPPs:

\begin{conjecture}
\label{conj:1}
For an irreducible MMPP, $c^2~\ge~1$.
\end{conjecture}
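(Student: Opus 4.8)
The plan is to sidestep the matrix identity \eqref{Eq:c2} and argue probabilistically, using that an MMPP is a Cox (doubly stochastic Poisson) process. By Lemma~\ref{lem:soscv} it suffices to show $\E[T_1^{\boldsymbol{\pi}}] \ge \E[T_1^{\boldsymbol{\alpha}}]$, and since $\E[T_1^{\boldsymbol{\alpha}}] = 1/\lambda^*$ the target reduces to $\E[T_1^{\boldsymbol{\pi}}] \ge 1/\lambda^*$. So first I would fix the Cox representation of the time-stationary MMPP: let $\{X(s)\}_{s\ge0}$ be the modulating CTMC started from $\boldsymbol{\eta}=\boldsymbol{\pi}$, hence stationary with $X(s)\sim\boldsymbol{\pi}$ for every $s$, write $\lambda(s)=\lambda_{X(s)}$ for the (random) rate path and $\Lambda_r=\int_0^r\lambda(s)\,ds$. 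Conditionally on the whole path, the MMPP on $[0,\infty)$ is an inhomogeneous Poisson process of intensity $\lambda(\cdot)$, so that $\Pl(T_1^{\boldsymbol{\pi}}>r\mid\{X(s)\})=e^{-\Lambda_r}$ and therefore
\[
\E[T_1^{\boldsymbol{\pi}}] \;=\; \int_0^\infty \Pl(T_1^{\boldsymbol{\pi}}>r)\,dr \;=\; \int_0^\infty \E\big[e^{-\Lambda_r}\big]\,dr .
\]
(As a check, $\E[e^{-\Lambda_r}]=\boldsymbol{\pi} e^{Cr}\mathbf{1}$ by Feynman--Kac, consistent with \eqref{eq:T1dist}.)

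The second step is a one-line convexity bound. Because $\{X(s)\}$ is stationary, $\E[\Lambda_r]=\int_0^r\E[\lambda(s)]\,ds=\int_0^r\boldsymbol{\pi} D\mathbf{1}\,ds=\lambda^* r$, and Jensen's inequality applied to the convex map $x\mapsto e^{-x}$ gives $\E[e^{-\Lambda_r}]\ge e^{-\E[\Lambda_r]}=e^{-\lambda^* r}$. Integrating, $\E[T_1^{\boldsymbol{\pi}}]\ge\int_0^\infty e^{-\lambda^* r}\,dr=1/\lambda^*=\E[T_1^{\boldsymbol{\alpha}}]$, and Lemma~\ref{lem:soscv} then yields $c^2\ge1$. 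In matrix terms the bound is just $\E[T_1^{\boldsymbol{\pi}}]=\boldsymbol{\pi}(-C)^{-1}\mathbf{1}=\lambda^* M_2/2 \ge 1/\lambda^*$, i.e.\ $M_2\ge 2M_1^2$, so the quantity in \eqref{Eq:SCV-Interval} is at least $2$; strictness holds whenever $\boldsymbol{\lambda}$ is non-constant, by strict Jensen.

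The place that needs care is the probabilistic set-up, not the inequality: one must justify rigorously that the time-stationary MMPP is a Cox process directed by the \emph{stationary} random measure $\lambda_{X(\cdot)}\,dt$ (this is what makes $\E[\lambda(s)]\equiv\lambda^*$, hence $\E[\Lambda_r]=\lambda^* r$), and that $T_1^{\boldsymbol{\pi}}<\infty$ a.s.\ (true once $\boldsymbol{\lambda}\neq\mathbf{0}$, by irreducibility). Granting this, the argument above is a complete proof of Conjecture~\ref{conj:1}. I would expect the genuine obstacle to appear only for the stronger stochastic order (IV): the same computation shows that \eqref{Eq:SO} is equivalent to $\Cov_{\boldsymbol{\pi}}\!\big(\lambda(0),\,e^{-\Lambda_t}\big)\le 0$ for every $t\ge0$ (here using $\boldsymbol{\alpha}_i=\pi_i\lambda_i/\lambda^*$ to write $\boldsymbol{\alpha} e^{Ct}\mathbf{1}=\lambda^{*-1}\E_{\boldsymbol{\pi}}[\lambda(0)e^{-\Lambda_t}]$), a positive-dependence statement between the rate at time $0$ and the integrated future rate that does \emph{not} follow from the first-moment identity $\E[\Lambda_r]=\lambda^* r$ alone and seems to require a real association property of the CTMC-modulated rate process — which need not hold entrywise in $t$ when $Q$ is non-reversible, since the autocovariance of $\lambda(\cdot)$ can then oscillate. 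Thus (III) looks reachable by the route above, while (IV) is where the work lies.
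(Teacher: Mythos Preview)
Your argument is correct, and it actually \emph{resolves} what the paper leaves as an open conjecture. The paper offers no proof of Conjecture~\ref{conj:1}; Section~\ref{sec4} only reports a Monte Carlo search over $10^6$ random MMPP instances together with the explicit MMPP$_2$ computation in Proposition~\ref{prop:mmpp2}. So there is nothing to compare against on their side beyond numerical evidence.

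Your route is the one the authors did not take: exploit the Cox structure of the MMPP directly rather than the matrix identity \eqref{Eq:c2}. The chain of implications is sound. For an MMPP one has $C=Q-\mathrm{diag}(\lambda_i)$, so the Feynman--Kac identity $\boldsymbol{\pi} e^{Ct}\mathbf{1}=\E_{\boldsymbol{\pi}}\!\big[e^{-\int_0^t \lambda_{X(s)}\,ds}\big]$ is exact; stationarity of $X$ under $\boldsymbol{\pi}$ gives $\E_{\boldsymbol{\pi}}[\Lambda_t]=\lambda^* t$; Jensen applied to $x\mapsto e^{-x}$ yields $\boldsymbol{\pi} e^{Ct}\mathbf{1}\ge e^{-\lambda^* t}$ for every $t\ge 0$; integrating gives $\E[T_1^{\boldsymbol{\pi}}]=\boldsymbol{\pi}(-C)^{-1}\mathbf{1}\ge 1/\lambda^*=\E[T_1^{\boldsymbol{\alpha}}]$; and Lemma~\ref{lem:soscv} closes the loop to $c^2\ge 1$. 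In matrix terms you have shown $\boldsymbol{\pi} C\mathbf{1}\,\boldsymbol{\pi} C^{-1}\mathbf{1}\ge 1$, precisely \eqref{Eq:c2}. Strictness when $\boldsymbol{\lambda}$ is non-constant follows because $\Lambda_t$ is then non-degenerate for $t>0$, so Jensen is strict on a set of positive measure. The technical caveats you flag (that the time-stationary MMPP is the Cox process directed by the \emph{stationary} rate path, and $T_1^{\boldsymbol{\pi}}<\infty$ a.s.) are routine for an irreducible finite-state MMPP with $\boldsymbol{\lambda}\neq\mathbf{0}$.

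Your closing remark is also on point: the Jensen bound gives $T_1^{\boldsymbol{\pi}}\ge_{\mathrm{st}}\mathrm{Exp}(\lambda^*)$, which dominates $T_1^{\boldsymbol{\alpha}}$ only in mean, not stochastically, so Conjecture~\ref{conj:2} remains untouched. The covariance reformulation $\boldsymbol{\alpha} e^{Ct}\mathbf{1}=(\lambda^*)^{-1}\E_{\boldsymbol{\pi}}[\lambda(0)e^{-\Lambda_t}]$ is correct and isolates exactly the positive-dependence obstruction the paper could not overcome either.
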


\begin{conjecture}
\label{conj:2}
For an  irreducible MMPP, $T_1^{\boldsymbol{\pi}} \ge_{\mbox{st}} T_1^{\boldsymbol{\alpha}}$.
\end{conjecture}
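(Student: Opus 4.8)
}
The plan is to establish the equivalent inequality \eqref{Eq:SO}, namely that
$$
\phi(t):=(\boldsymbol{\pi}-\boldsymbol{\alpha})e^{Ct}\mathbf{1}=\bar F_{\boldsymbol{\pi}}(t)-\bar F_{\boldsymbol{\alpha}}(t)\ \ge\ 0\qquad\text{for all }t\ge 0,
$$
where $\bar F_{\boldsymbol{\eta}}(t):=\boldsymbol{\eta}e^{Ct}\mathbf{1}=\Pl(T_1^{\boldsymbol{\eta}}>t)$. First I would record the boundary behaviour: $\phi(0)=(\boldsymbol{\pi}-\boldsymbol{\alpha})\mathbf{1}=0$ and $\phi(t)\to 0$ as $t\to\infty$ since $C$ is a non-singular sub-generator, while, using $-C\mathbf{1}=D\mathbf{1}=:\boldsymbol{\lambda}$ and $\boldsymbol{\alpha}=\boldsymbol{\pi}D/\lambda^*$ from \eqref{Eq:pi-alpha},
$$
\phi'(0)=(\boldsymbol{\pi}-\boldsymbol{\alpha})C\mathbf{1}=-(\boldsymbol{\pi}-\boldsymbol{\alpha})D\mathbf{1}=\frac{\Var_{\boldsymbol{\pi}}(\lambda)}{\lambda^*}\ \ge\ 0,
$$
where $\E_{\boldsymbol{\pi}},\Var_{\boldsymbol{\pi}}$ denote moments of the rate $\lambda_i$ under $\boldsymbol{\pi}$ (so $\E_{\boldsymbol{\pi}}(\lambda)=\lambda^*$). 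Thus $\phi$ leaves the origin upward and returns to $0$, and the whole content of the conjecture is that it does not dip negative in between. By contrast, property (III) asks only for the weaker integrated bound $\int_0^\infty\phi(t)\,dt=\tfrac12 M_1(c^2-1)\ge 0$ (use $\int_0^\infty\bar F_{\boldsymbol{\pi}}=\E[T_1^{\boldsymbol{\pi}}]$ and Lemma~\ref{lem:soscv}), which \eqref{Eq:SO} implies.

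Next I would exploit two reformulations. Using $\boldsymbol{\pi}D=-\boldsymbol{\pi}C$ (from $\boldsymbol{\pi}Q=\mathbf 0'$) and \eqref{Eq:pi-alpha} gives the classical forward-recurrence identity $f_{\boldsymbol{\pi}}(t)=-\boldsymbol{\pi}Ce^{Ct}\mathbf{1}=\lambda^*\boldsymbol{\alpha}e^{Ct}\mathbf{1}=\lambda^*\bar F_{\boldsymbol{\alpha}}(t)$, so that \eqref{Eq:SO} is equivalent to $h_{\boldsymbol{\pi}}(t)=f_{\boldsymbol{\pi}}(t)/\bar F_{\boldsymbol{\pi}}(t)\le\lambda^*$ for all $t$; since $h_{\boldsymbol{\pi}}(0)=\lambda^*$, the conjecture says the time-stationary hazard rate never exceeds its value at the origin. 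Integrating $f_{\boldsymbol{\pi}}=\lambda^*\bar F_{\boldsymbol{\alpha}}$ once more shows \eqref{Eq:SO} is also equivalent to $T_1^{\boldsymbol{\alpha}}\sim PH(\boldsymbol{\alpha},C)$ being New-Worse-than-Used-in-Expectation: $\E[\,T_1^{\boldsymbol{\alpha}}-t\mid T_1^{\boldsymbol{\alpha}}>t\,]\ge\E[T_1^{\boldsymbol{\alpha}}]$ for all $t\ge 0$. Finally, writing $\boldsymbol{\pi}_t:=\boldsymbol{\pi}e^{Ct}/\bar F_{\boldsymbol{\pi}}(t)$ for the phase distribution of the time-stationary MMPP conditioned on $\{N(t)=0\}$, one has $h_{\boldsymbol{\pi}}(t)=\boldsymbol{\pi}_t D\mathbf{1}=\E_{\boldsymbol{\pi}_t}(\lambda)$, so the claim becomes the intuitive statement $\E_{\boldsymbol{\pi}_t}(\lambda)\le\E_{\boldsymbol{\pi}}(\lambda)$: conditioning an MMPP on having produced no event so far biases its phase toward the low-rate states. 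The natural sufficient condition is that $h_{\boldsymbol{\pi}}$ be DHR; note the Telek--Horvath example \eqref{Example} only rules out DHR of the \emph{event}-stationary time $T_1^{\boldsymbol{\alpha}}$ (property (II)) and says nothing directly about $h_{\boldsymbol{\pi}}$.

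Concretely I would attempt, in order: (a) prove $h_{\boldsymbol{\pi}}$ is DHR. From \eqref{eq:derHazard} with $\boldsymbol{\eta}=\boldsymbol{\pi}$ and $C=Q-D$ one computes $h_{\boldsymbol{\pi}}'(t)=\boldsymbol{\pi}_tQ\boldsymbol{\lambda}-\Var_{\boldsymbol{\pi}_t}(\lambda)$, so DHR amounts to $\boldsymbol{\pi}_tQ\boldsymbol{\lambda}\le\Var_{\boldsymbol{\pi}_t}(\lambda)$ for all $t$; since $\boldsymbol{\pi}_tQ\boldsymbol{\lambda}$ is not sign-definite I would test this numerically first, and if it fails retreat to the weaker target $h_{\boldsymbol{\pi}}(t)\le h_{\boldsymbol{\pi}}(0)$. (b) A single-crossing / coupling argument in the spirit of the MSPP proof of Proposition~\ref{eq:msppSO}: there diagonality of $C$ forced the sign pattern of $\{\pi_i-\alpha_i\}$ to align with the ordering of $\{e^{-c_it}\}$; for an MMPP I would instead look for structure on $Q$ --- e.g. a stochastically monotone, totally positive ($\mathrm{TP}_2$) semigroup $e^{Qt}$ with the rates $\lambda_i$ monotone in the phase order --- under which the stationary phase path is associated, forcing $\lambda_{X(t)}$ to be negatively correlated with $\mathbbm 1\{N(t)=0\}$ (whose conditional expectation given the phase path is $e^{-\int_0^t\lambda_{X(s)}\,ds}$), hence $\E_{\boldsymbol{\pi}_t}(\lambda)\le\lambda^*$; one would then try to remove the structural hypothesis by approximation. (c) Diagonalise $C=V(-\Theta)V^{-1}$ and bound the sub-dominant (possibly complex) exponential modes of $\phi$ by the Perron mode $e^{-\theta_1 t}$, for which a prerequisite is the spectral bound $\theta_1\le\lambda^*$ --- plausible, since the Perron eigenvector of $D-Q$ realises $\theta_1$ as a weighting of the $\lambda_i$ that down-weights high-rate states, but establishing that is itself not immediate. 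As sanity checks I would settle the cases $p=3$ and $Q$ reversible symbolically.

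The main obstacle is the complete absence of order structure for a general irreducible $Q$: there is no canonical partial order on the phases compatible with both the generator and the rate vector $\boldsymbol{\lambda}$, so neither an FKG/association inequality nor a monotone coupling is available off the shelf, and $h_{\boldsymbol{\pi}}$ need not even be monotone. Moreover the DHR avenue that normally settles such statements is blocked, so any proof must make essential use of the time-stationary initial vector $\boldsymbol{\pi}$ and of the $h$-transform structure of conditioning on $\{N(t)=0\}$ --- most plausibly through a new integral or probabilistic representation rather than a differential inequality.
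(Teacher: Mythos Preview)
The statement is Conjecture~\ref{conj:2}, and the paper does \emph{not} prove it: it is explicitly left open, with only numerical evidence offered in Section~\ref{sec4} ($10^6$ random MMPP instances of orders $3$--$6$ tested against \eqref{Eq:SO}, plus a further $10^6$ cyclic examples). There is therefore no ``paper's proof'' to compare against. Your submission is, correctly and honestly, a research plan rather than a proof, and you identify the obstacle yourself in the final paragraph.

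That said, your reformulations are all correct and go further than anything the paper records. The identity $f_{\boldsymbol{\pi}}(t)=\lambda^*\bar F_{\boldsymbol{\alpha}}(t)$, the equivalence of \eqref{Eq:SO} with $h_{\boldsymbol{\pi}}(t)\le\lambda^*=h_{\boldsymbol{\pi}}(0)$, the NWUE reformulation for $T_1^{\boldsymbol{\alpha}}$, the boundary computation $\phi'(0)=\Var_{\boldsymbol{\pi}}(\lambda)/\lambda^*$, and the derivative formula $h_{\boldsymbol{\pi}}'(t)=\boldsymbol{\pi}_tQ\boldsymbol{\lambda}-\Var_{\boldsymbol{\pi}_t}(\lambda)$ all check out by direct calculation. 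Your observation that the Telek--Horv\'ath example \eqref{Example} blocks DHR of $T_1^{\boldsymbol{\alpha}}$ but says nothing about $h_{\boldsymbol{\pi}}$ is also a valid and non-obvious distinction.

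What remains a genuine gap --- and you name it --- is that none of routes (a)--(c) is closed. Route (a) needs the sign of $\boldsymbol{\pi}_tQ\boldsymbol{\lambda}$, which is not definite in general; route (b) imposes monotonicity/$\mathrm{TP}_2$ hypotheses that an arbitrary irreducible $Q$ need not satisfy, and the ``remove by approximation'' step is exactly where the difficulty would resurface; route (c) rests on a spectral inequality $\theta_1\le\lambda^*$ that is itself open. So you have produced a clean catalogue of equivalent targets and a correct diagnosis of the obstruction (no canonical order on the phase space), which is consistent with --- and somewhat sharper than --- the state in which the paper leaves the problem, but it is not a proof.
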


In an attempt to disprove these conjectures or alternatively gain confidence in their validity, we carried out an extensive numerical experiment. Our experiment works by generating random instances of MMPPs . Each instance is generated by first generating a matrix $Q$ with uniform$(0,1)$ off-diagonal entries and diagonal entries that ensure row sums are $0$. We then generate a matrix $D$ with diagonal elements that are exponentially distributed with rate $1$. Such a $(Q,D)$ pair then implies ${\boldsymbol{\pi}}$ and ${\boldsymbol{\alpha}}$. For each such MMPP we calculate $\boldsymbol{\pi} C {\mathbf 1} \boldsymbol{\pi} C^{-1} {\mathbf 1} -1 $ as in \eqref{Eq:c2} and $(\boldsymbol{\pi} - \boldsymbol{\alpha})e^{C t} \mathbf{1}$ as in \eqref{Eq:SO}, where we take $t \in \{0,0.2,0.4,\ldots,9.8,10.0\}$. We then ensure that both of these quantities are non-negative.

We repeated this experiment for $10^6$ random MMPP instances of orders $3,4,5$ and $6$. In all cases the calculated quantities were greater that $-10^{-15}$.  Note that in certain cases, the quantity associated with (IV) was negative and lying in the range $(-10^{-15},-10^{-16}]$. We attribute this to numerical error stemming from the calculation of the matrix exponential $e^{Ct}$. We ran our experiments with the Julia programming language, V1.0. The calculation time was about 1.5 hours.

This provides some evidence for the validity of Conjectures 1 and 2, although it is clearly not a proof. Further, we note that it is possible that some extreme cases exist that are not likely to come up by uniformly and randomly generating entries of $Q$. For example the cyclic matrix $Q$ in \eqref{Example}. For this we have also considered random cyclic $Q$ matrices with non-zero entries similar to \eqref{Example}. We generated $10^6$ such (order $4$) examples and all agreed with (III) and (IV).

%%%%%%%%%%%%%%%%%%%%%%%%%%%%%%%%%%%%%%%%%%%%%%%%
%%%%%%%%%%%%%%%%%%%%%%%%%%%%%%%%%%%%%%%%%%%%%%%%
%%%%%%%%%%%%%%%%%%%%%%%%%%%%%%%%%%%%%%%%%%%%%%%%
%%%%%%%%%%%%%%%%%%%%%%%%%%%%%%%%%%%%%%%%%%%%%%%%
%%%%%%%%%%%%%%%%%%%%%%%%%%%%%%%%%%%%%%%%%%%%%%%%
%%%%%%%%%%%%%%%%%%%%%%%%%%%%%%%%%%%%%%%%%%%%%%%%
%%%%%%%%%%%%%%%%%%%%%%%%%%%%%%%%%%%%%%%%%%%%%%%%
%%%%%%%%%%%%%%%%%%%%%%%%%%%%%%%%%%%%%%%%%%%%%%%%
%%%%%%%%%%%%%%%%%%%%%%%%%%%%%%%%%%%%%%%%%%%%%%%%
\section{Conclusion}
\label{sec5}

We have highlighted various related properties for point processes on the line and MAPs exhibiting diagonal matrices ($C$ or $D$) in particular. Showing that $c^2 \ge 1$ for MMPP and establishing the stochastic order $T_1^{\boldsymbol{\pi}} \ge_{\mbox{st}} T_1^{\boldsymbol{\alpha}}$ remains an open problem. We have shown this for MMPPs of order $2$ and using a similar technique to our MSPP proof, we can also show it for MMPPs with symmetric $C$ matrices. However for general MMPPs this remains an open problem.

We note, that stepping outside of the matrix analytic paradigm and considering general Cox processes is also an option. In fact, since any Cox process can be approximated by an MMPP, we believe that versions of conjectures $1$ and $2$ also hold for Cox processes under suitable regularity conditions.

There is also a related branch of questions dealing with characterizing the Poisson process via $c^2 = 1$ and considering when an MMPP is Poisson. For example, for the general class of MAPs, the authors of \cite{bean2000map} provide a condition for determining if a given MAP is Poisson. It is not hard to construct a MAP with $c^2 = 1$ that is not Poisson. But, we believe that all MMPPs with $c^2 = 1$ are Poisson. Yet, we don't have a proof. Further, we believe that for an MMPP, if $c^2 = 1$ then all $\lambda_i$ are equal (the converse is trivially
true). We don't have a proof of this either. Related questions also hold for the more general Cox processes.

We also note that the MSPP class of process that we considered generalized hyper-exponential renewal processes as well as a class of processes called Markovian Transition Counting Processes (MTCP) as in \cite{asanjarani2016queueing}.

%%%%%%%%%%%%%%%%%%%%%%%%%%%%%%
%%%%%%%%%%%%%%%%%%%%%%%%%%%%%%
%%%%%%%%%%%%%%%%%%%%%%%%%%%%%%

\section*{Acknowledgement}
Azam Asanjarani's research is supported by the Australian Research Council Centre of Excellence for the Mathematical and Statistical Frontiers (ACEMS). Yoni Nazarathy is supported by Australian Research Council Grant DP180101602. We thank Soren Asmussen, Qi-Ming He, Illes Horvath, Peter Taylor and Miklos Telek for useful discussions and insights related to this problem.

%%%%%%%%%%%%%%%%%%%%%%%%%%%%%%
%%%%%%%%%%%%%%%%%%%%%%%%%%%%%%
%%%%%%%%%%%%%%%%%%%%%%%%%%%%%%

\end{document}